\newtheorem{theorem}{Theorem}
\newcommand{\RR}{{\mathbb{R}}}
\newcommand{\ZZ}{{\mathbb{Z}}}
\newcommand{\PP}{{\mathbb{P}}}
\title{\bf Near-best $C^2$ quartic spline quasi-interpolants on type-6 tetrahedral partitions of bounded domains}
\author{C. Dagnino, P. Lamberti and S. Remogna\thanks{Department of Mathematics, University of Torino, via C. Alberto 10, 10123 Torino, Italy ({\tt catterina.dagnino@unito.it, paola.lamberti@unito.it, sara.remogna@unito.it})}}
\date{}
\begin{document}

\maketitle

\begin{abstract}
In this paper, we present new quasi-interpolating spline schemes defined on 3D bounded domains, based on trivariate $C^2$ quartic box splines on type-6 tetrahedral partitions and with approximation order four. Such methods can be used for the reconstruction of gridded volume data. More precisely, we propose near-best quasi-interpolants, i.e. with coefficient functionals obtained by imposing the exactness of the quasi-interpolants on the space of polynomials of total degree three and minimizing an upper bound for their infinity norm. In case of bounded domains the main problem consists in the construction of the coefficient functionals associated with boundary generators (i.e. generators with supports not completely inside the domain), so that the functionals involve data points inside or on the boundary of the domain. 

We give norm and error estimates and we present some numerical tests, illustrating the approximation properties of the proposed quasi-interpolants, and comparisons with other known spline methods. Some applications with real world volume data are also provided.
\end{abstract}\vskip 10pt

{\sl Keywords}: Trivariate box spline; Quasi-interpolation; Type-6 tetrahedral partition; Approximation order; Gridded volume data

\section{Introduction} \label{intro}

We consider the problem of constructing appropriate non-discrete models on bounded domains from given discrete volume data. The development of such trivariate models is relevant because it is the theoretical basis for many applications, such as scientific visualization, computer graphics, medical imaging, numerical simulation, seismic applications, etc. Indeed, volume data sets typically represent some kind of density acquired by special devices that often require structured input data, so that the samples are arranged on a regular three-dimensional grid. It is important to underline that we need to construct models defined on bounded domains of $\RR^3$, because we have at our disposal a finite number of volumetric data. 

In classical approaches the underlying mathematical models use local trivariate tensor-product polynomial splines, defined on bounded domains as linear combinations of univariate B-spline products. If we require a certain smoothness along the coordinate axes, such splines can be of high coordinate degree, that can create unwanted oscillations and often require (approximate) derivative data at certain prescribed points. These reasons raise the natural problem of constructing alternative smooth spline models, that use only data values on the volumetric grid and simultaneously approximate smooth functions as well as their derivatives. Moreover, in order to avoid unwanted oscillations, it is desirable that polynomial sections have total degree, instead of the coordinate degree, that is typical of tensor product schemes.

A possible 3D spline model, beyond the classical tensor product schemes, is represented by blending sums of univariate and bivariate $C^1$ quadratic spline quasi-interpolants (see e.g. \cite{r1,s1,s2}), defined on a bounded domain, partitioned into vertical prisms with triangular horizontal sections. In this case the approximation order is three, as for triquadratic splines, but the degree of the piecewise polynomials is only four, instead of six.

In order to have smooth splines of lower total degree, other methods based on trivariate splines defined on type-6 tetrahedral partitions of bounded domains have been proposed.  In literature \cite{nrsz,SZ}, quasi-interpolating schemes, based on either quadratic or cubic $C^1$ splines on type-6 tetrahedral partitions, are described and the piecewise polynomials are directly determined by setting their Bernstein-B\'ezier coefficients (BB-coefficients) to appropriate combinations of the data values. The above splines yield approximation order two.

In this paper we propose and analyse new quasi-interpolating spline sche\-m\-es, on type-6 tetrahedral partitions of bounded domains, with higher $C^2$ smoothness and approximation order four. They are expressed as linear combination of scaled translates of the trivariate $C^2$ quartic box spline $B$ \cite{p1}, defined on such a partition, and local functionals. These functionals are defined as linear combinations of data values. Such quasi-interpolating operators are exact on the space $\PP_3$ of trivariate polynomials of total degree at most three and, therefore, yield approximation order four.

We recall that a fundamental property of such quasi-interpolants (abbr. QIs) is that they do not need the solution of huge systems of linear equations, with respect to the approximation by interpolating operators. This is very effective in the 3D space.

Among the different techniques that can be used to define QIs with such properties, here we decide to construct operators of near-best type (i.e. with coefficient functionals obtained by minimizing an upper bound for their infinity norm), motivated by the good results obtained in \cite{ABIS,BISS,BISS2,BISS1,BISS3,I,racm,r} for the univariate and bivariate settings. The main problem in their construction consists in finding the coefficient functionals associated with boundary generators, i.e. scaled translate box splines with supports not completely inside the domain.

We remark that trivariate splines with $C^2$ smoothness can be used for constructing curvature continuous surfaces by tracing their zero sets.

Here is an outline of the paper. In Section \ref{spl} we define the space $S_4^2(\Omega, \mathcal{T}_{{\mathbf m}})$ of $C^2$ quartic splines on a type-6 tetrahedral partition $\mathcal{T}_{{\mathbf m}}$ of a parallelepiped $\Omega$ and present its properties. In Section \ref{QI}, we construct near-best spline QIs and we provide norm and error estimates for smooth functions. Finally, in Section \ref{num}, in order to illustrate the approximation properties of the proposed quasi-interpolants, some numerical tests are presented and compared with those obtained by some other known trivariate spline quasi-interpolants. We also provide some examples with real world volume data.

\section{The spline space $S_4^2(\Omega, \mathcal{T}_{{\mathbf m}})$}\label{spl}

A box spline is specified by a set of direction vectors that determines the shape of its support and also its smoothness properties. We consider the set of seven direction vectors of $\ZZ^3$, spanning $\RR^3$ \cite{p1}
$$
X=\{e_1, e_2, e_3, e_4, e_5, e_6, e_7\},
$$
where
$$
\begin{array}{c}
e_1=(1,0,0), \quad e_2=(0 ,1 ,0 ),\quad  e_3=(0 ,0 ,1), \quad e_4=(1 ,1 ,1), \\
e_5=(-1 ,1 ,1), \quad e_6=(1 ,-1 ,1), \quad e_7=(-1 ,-1 ,1).
\end{array}
$$

Such direction vectors subdivide $\RR^3$ into the so-called type-6 tetrahedral partition, obtained from a given cube partition of the space (see Fig. \ref{part}$(a)$), by subdividing each cube into 24 tetrahedra (see Fig. \ref{part}$(b)$). Since cuts with six planes are required to subdivide the cubes in this way, the resulting tetrahedral partition is called type-6 tetrahedral partition.

\begin{figure}[ht]
\begin{minipage}{60mm}
\centering\includegraphics[height=3.8cm]{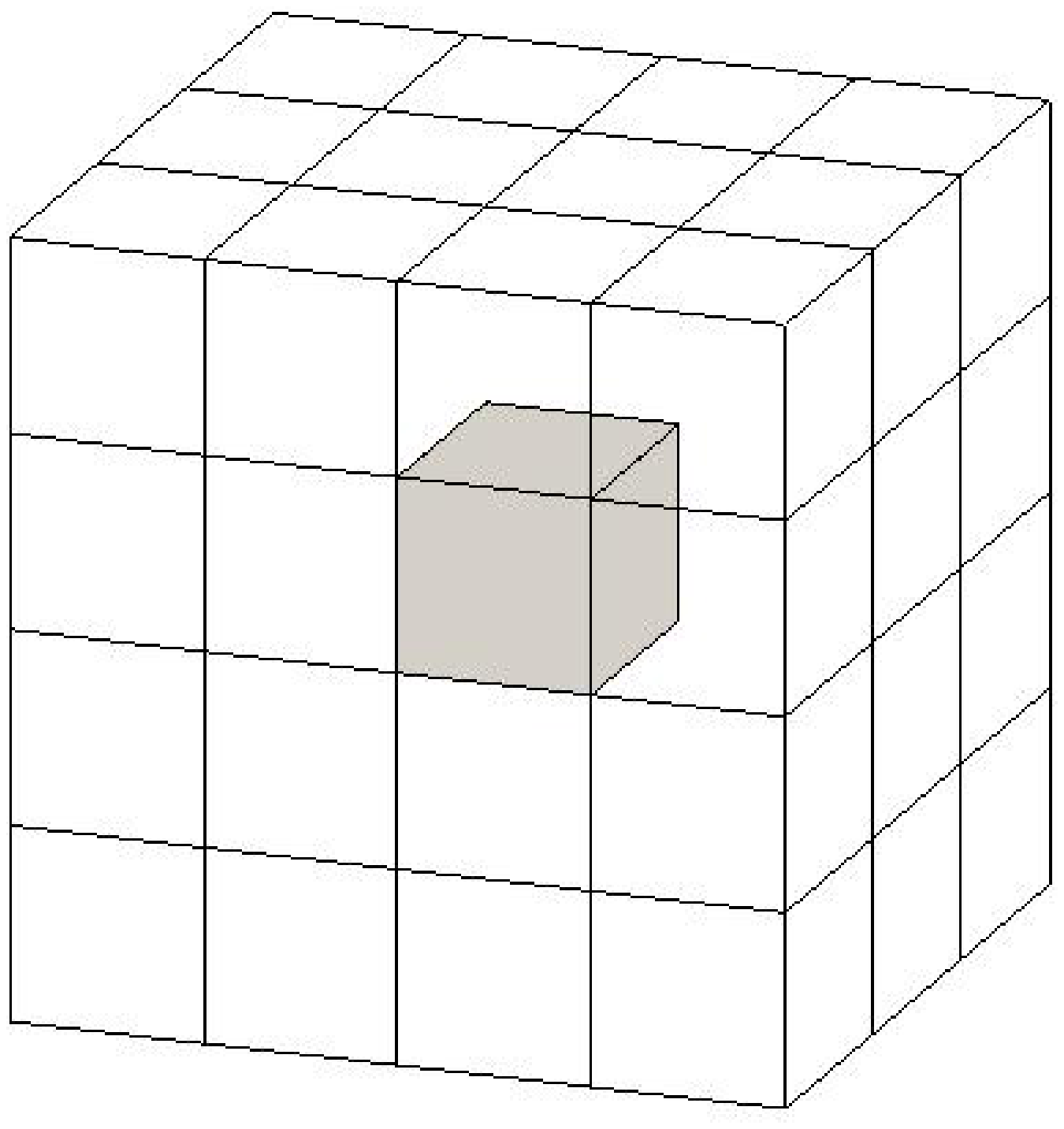}
\centerline{$(a)$}
\end{minipage}
\hfil
\begin{minipage}{60mm}
\centering\includegraphics[height=3.8cm]{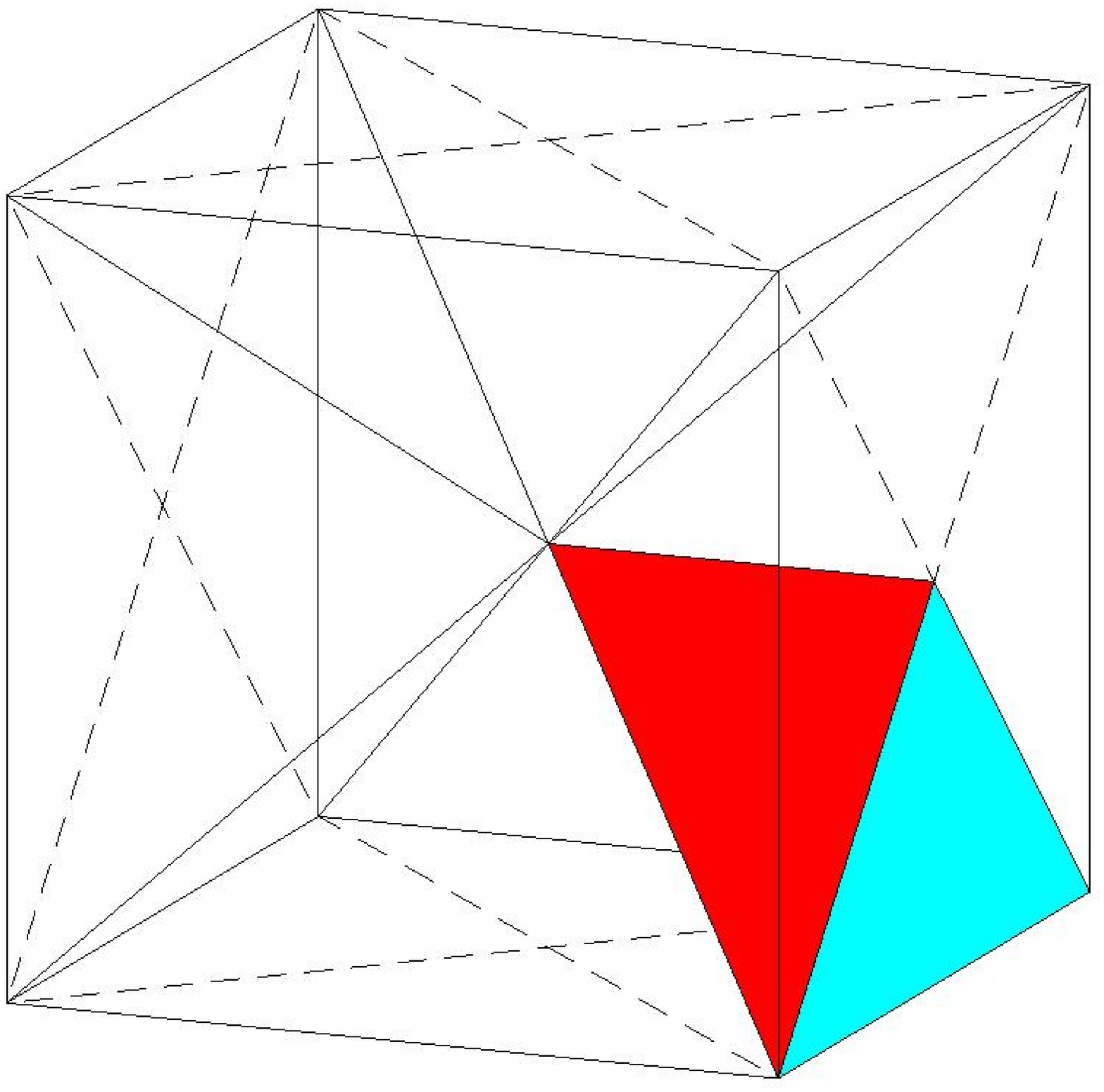}
\centerline{$(b)$}
\end{minipage}
\caption{$(a)$ Cube partition. $(b)$ Uniform type-6 tetrahedral partition, obtained by subdividing each cube into 24 tetrahedra}
	\label{part}
\end{figure}

As the set $X$ has seven elements in $\RR^3$, the box spline $B(\cdot)=B(\cdot|X)$ is of degree four (see e.g. \cite[Chap. 11]{bhs}, \cite[Chap. 1]{dhr} and \cite[Chap. 15,16,17]{LS}). Its smoothness depends on the determination of the number $d$, such that $d+1$ is the minimal number of directions to be removed from $X$ to obtain a reduced set, that does not span $\RR^3$. Then, one deduces that the smoothness class is $C^{d-1}$. In our case $d=3$, thus the polynomial pieces defined over each tetrahedron are of degree four and they are joined with $C^2$ smoothness.

The support of $B$ is the truncated rhombic dodecahedron centered at the point $(\frac12,\frac12,\frac52)$ and contained in the cube $[-2,3]\times[-2,3]\times[0,5]$ (see Fig. \ref{supp}).

\begin{figure}
\centering\includegraphics[width=6cm]{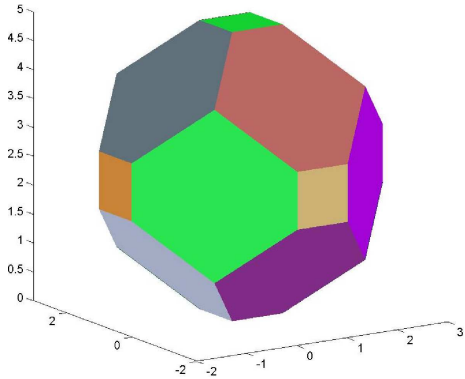}
\caption{The support of the seven directional box spline $B$}
	\label{supp}
\end{figure}

Now, let $m_1$, $m_2$ and $m_3$ be positive integers and let $\Omega=[0,m_1h] \times [0,m_2h] \times [0,m_3h]$, $h>0$, be a parallelepiped subdivided into $m_1m_2m_3$ equal cubes and endowed with the type-6 tetrahedral partition $\mathcal{T}_{{\mathbf m}}$, ${\mathbf m}=(m_1,m_2,m_3)$ (see Fig. \ref{part}). Let 
\begin{equation}\label{aa}
\mathcal{A}= \left\{ \begin{array}{lll}
							  & -1\le i \le m_1+2, &\\
\alpha=(i,j,k), & -1\le j \le m_2+2, & \alpha \notin \mathcal{A}^\prime\\
							  & -1\le k \le m_3+2; &
									    \end{array}\right\},
\end{equation}
with $\mathcal{A}^\prime$ the set of indices defined by
$$
\begin{array}{lll}
\mathcal{A}^\prime= & \{ (i,-1,-1), \; (i,m_2+2,-1),&                               \\ 
               & (i,-1,m_3+2), \; (i,m_2+2,m_3+2), & {\rm for } \; -1\le i\le m_1+2, \\
               & (-1,j,-1), \; (m_1+2,j,-1),   &                               \\ 
               & (-1,j,m_3+2), \; (m_1+2,j,m_3+2), & {\rm for } \; 0\le j\le m_2+1,  \\
               & (-1,-1,k), \; (m_1+2,-1,k),   &                               \\
               & (-1,m_2+2,k), \; (m_1+2,m_2+2,k), & {\rm for } \; 0\le k\le m_3+1 \}.
\end{array}
$$

Since $B$ has centre at the point $\left(\frac12,\frac12,\frac52\right)$, we define the scaled translates of $B$, $\{ B_{\alpha}, \alpha \in \mathcal{A}\}$, in the following way:
$$
B_{\alpha}(x,y,z)=B_{i,j,k}(x,y,z)=B\left(\frac{x}{h}-i+1,\frac{y}{h}-j+1,\frac{z}{h}-k+3\right),
$$
whose supports are centered at the points 
\begin{equation}\label{center}
C_\alpha=C_{i,j,k}=\left(\left(i-\frac12\right)h,\left(j-\frac12\right)h,\left(k-\frac12\right)h\right).
\end{equation}
The set $\mathcal{A}$, defined in (\ref{aa}), is the index set of scaled translates of $B$ whose supports overlap with $\Omega$.

Then, we define the space generated by the B-splines $\{ B_\alpha, \alpha \in \mathcal{A}\}$
$$
S_4^2(\Omega, \mathcal{T}_{{\mathbf m}})=\left\{s=\sum_{\alpha \in  \mathcal{A}} a_\alpha B_\alpha, \; \; a_\alpha \in \RR \right\},
$$
that is a subspace of the whole space of all trivariate $C^2$ quartic splines defined on $\mathcal{T}_{{\mathbf m}}$. Moreover, it is well-known that $\PP_3 \subset S_4^2(\Omega, \mathcal{T}_{{\mathbf m}})$ and $\PP_4 \not\subset S_4^2(\Omega, \mathcal{T}_{{\mathbf m}})$.

We also recall \cite[Chap. 3]{dhr} that the approximation power of $S_4^2(\Omega, \mathcal{T}_{{\mathbf m}})$ is the largest integer $r$ for which
$$
\mbox{dist}(f,S_4^2(\Omega, \mathcal{T}_{{\mathbf m}}))= O(h^r)
$$
for all sufficiently smooth $f$, with the distance measured in the $L_p(\Omega)$-norm ($1 \leq p \leq \infty$). In our case we get $r=4$.

\section{Quasi-interpolation operators in $S_4^2(\Omega, \mathcal{T}_{{\mathbf m}})$}\label{QI}

In the space $S_4^2(\Omega, \mathcal{T}_{{\mathbf m}})$ we consider quasi-interpolants of the form
\begin{equation}\label{oQI}
	Qf=\sum_{\alpha\in \mathcal{A}} \lambda_\alpha (f)  B_\alpha,
\end{equation}
where $f \in C(\Omega)$ and $\lambda_\alpha (f)$ are linear combinations of values of $f$ at specific points inside $\Omega$ or on its boundary $\partial \Omega$.

The data point set, used in the construction of the coefficient functionals $\lambda_\alpha (f)$, is 
\begin{equation}\label{Dset}
	\mathcal{D} = \{ M_\alpha=M_{i,j,k}=(s_i,t_j,u_k), \, (i,j,k) \in \mathcal{A}^M\},
\end{equation}

with $\mathcal{A}^M=\{(i,j,k),\, 0\le i\le m_1+1,\, 0\le j\le m_2+1,\, 0\le k\le m_3+1\}$ and
$$
\begin{array}{llll}
s_0=0, & s_{i}=(i-\frac12)h, & 1\le i\le m_1, &  s_{m_1+1}=m_1h; \\ 
t_0=0, & t_{j}=(j-\frac12)h, & 1\le j\le m_2, &  t_{m_2+1}=m_2h; \\ 
u_0=0, & u_{k}=(k-\frac12)h, & 1\le k\le m_3, &  u_{m_3+1}=m_3h.
\end{array}
$$
The values of the function $f$ at the above points are denoted by $f_\alpha=f(M_{\alpha})$. In Fig. \ref{ma} we report some data points $M_\alpha \in \mathcal{D}$, depending on the position of cubes in $\Omega$. We remark that they are all inside $\Omega$ or on $\partial \Omega$. In particular, the points lying on $\partial \Omega$ (see e.g. Fig. \ref{ma} $(b)$, $(c)$, $(d)$) can be thought of as points outside $\Omega$ and projected on $\partial \Omega$.

\begin{figure}[ht]
\begin{minipage}{25mm}
\centering\includegraphics[width=3cm]{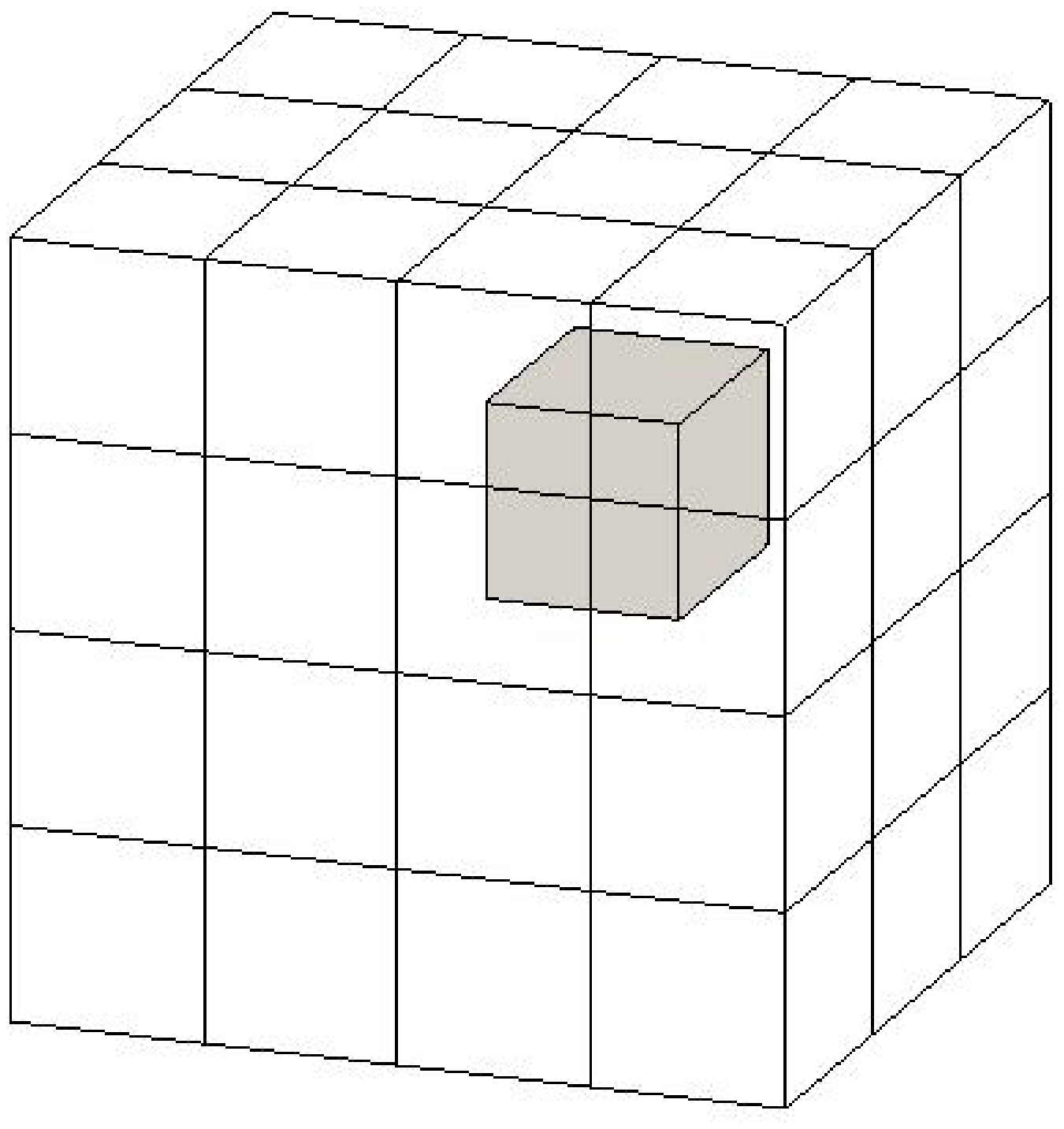}
\end{minipage}
\begin{minipage}{25mm}
\centering\includegraphics[width=1.5cm]{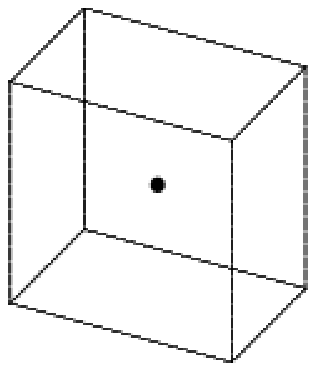}
\end{minipage}
\begin{minipage}{25mm}
\centering\includegraphics[width=3cm]{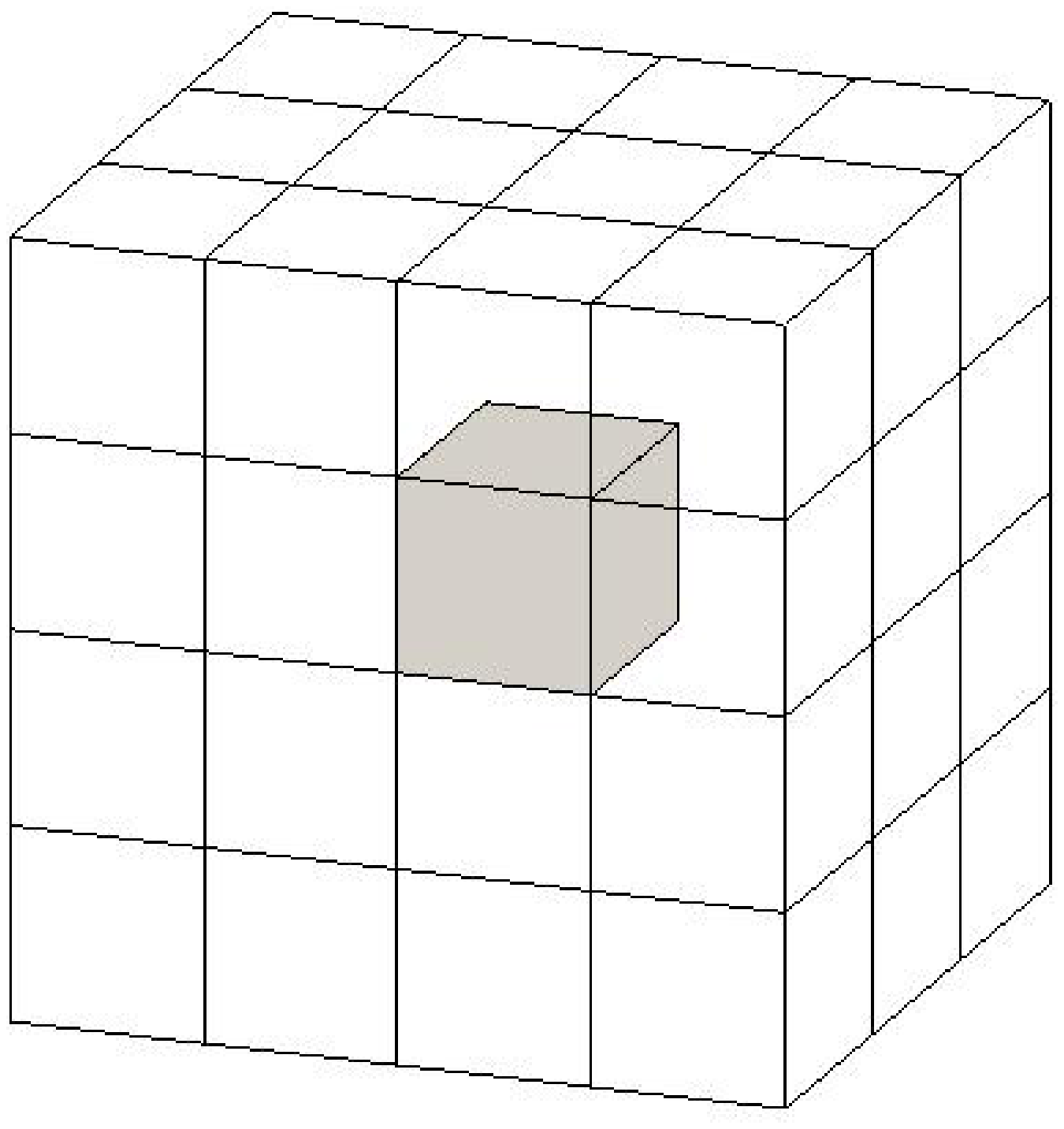}
\end{minipage}
\begin{minipage}{25mm}
\centering\includegraphics[width=1.5cm]{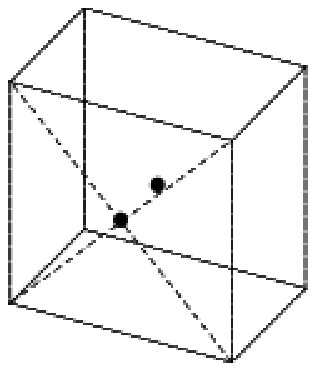}
\end{minipage}
\hfil
\begin{minipage}{100mm}
\centerline{\hskip2.5cm $(a)$ \hskip5.5cm $(b)$}
\end{minipage}
\hfil
\begin{minipage}{25mm}
\centering\includegraphics[width=3cm]{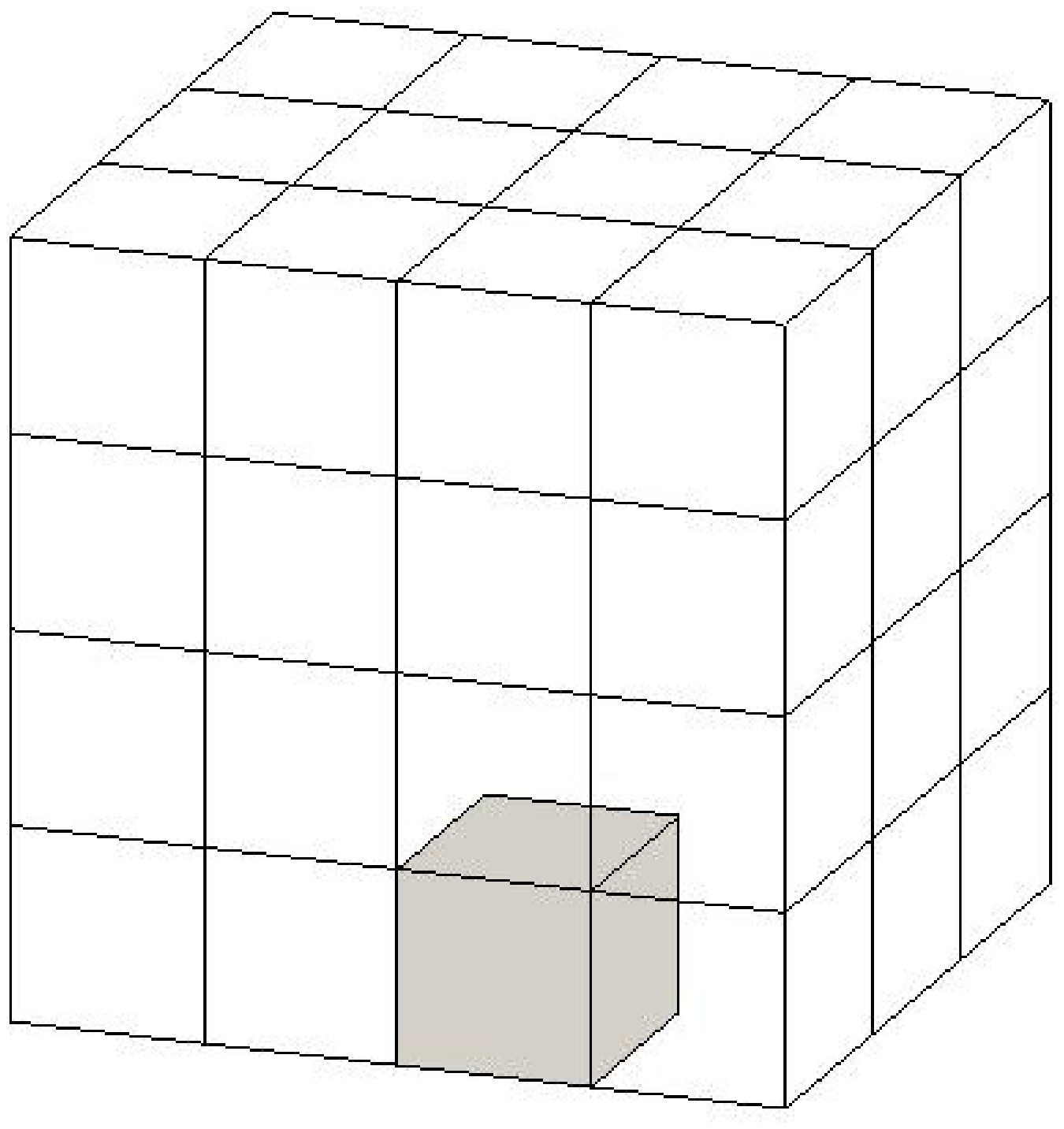}
\end{minipage}
\begin{minipage}{25mm}
\centering\includegraphics[width=1.5cm]{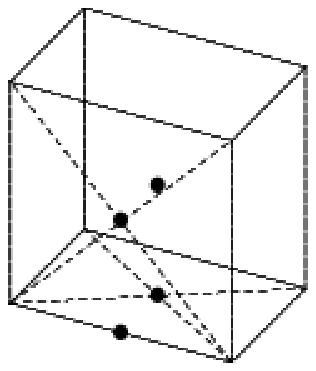}
\end{minipage}
\begin{minipage}{25mm}
\centering\includegraphics[width=3cm]{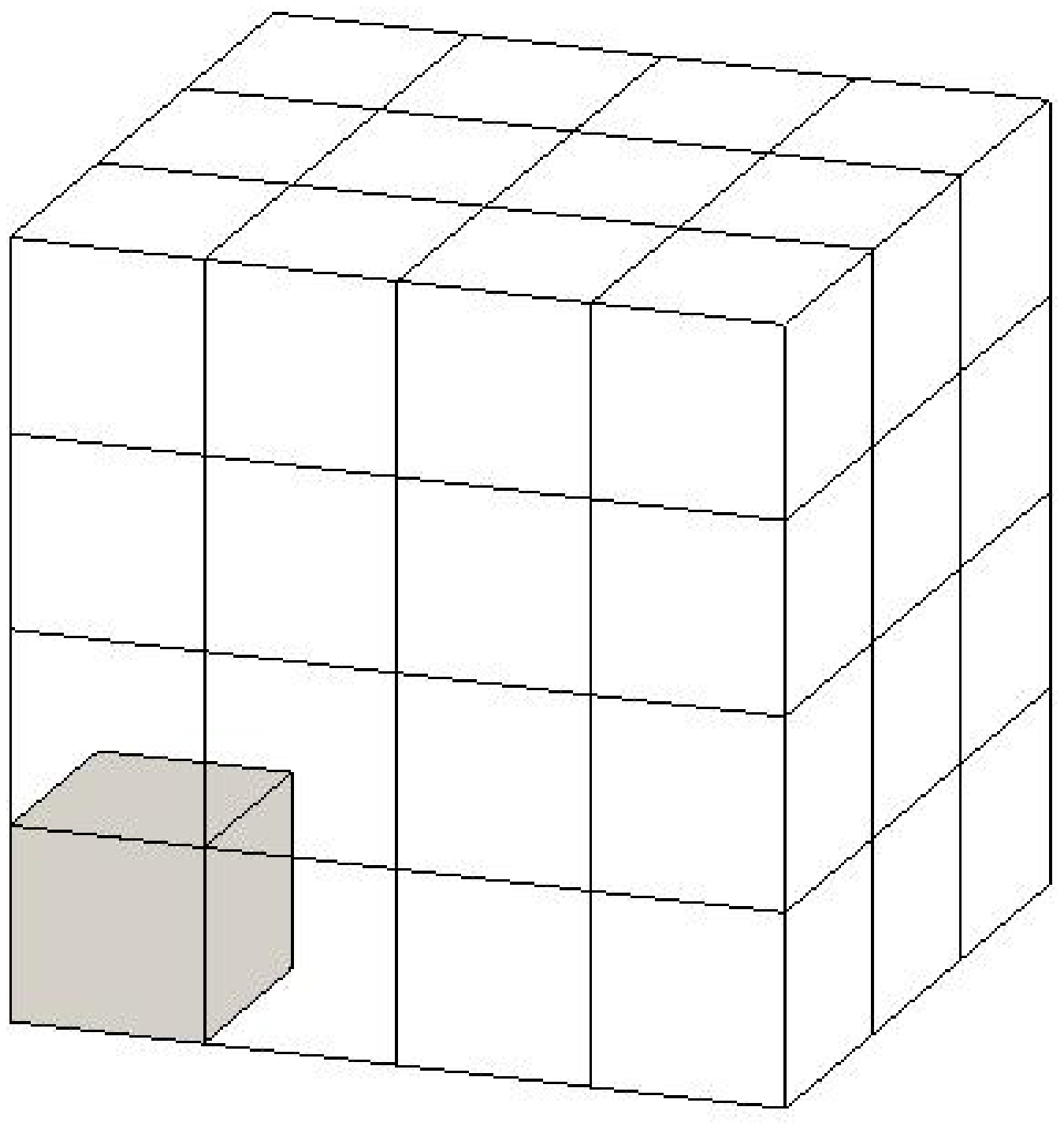}
\end{minipage}
\begin{minipage}{25mm}
\centering\includegraphics[width=1.5cm]{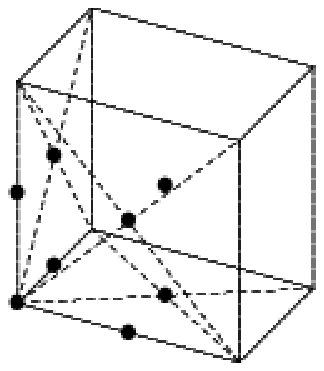}
\end{minipage}
\hfil
\begin{minipage}{100mm}
\centerline{\hskip2.5cm $(c)$ \hskip5.5cm $(d)$}
\end{minipage}
\hfil
\caption{Data points $M_\alpha$'s for $(a)$ inner, $(b)$ face, $(c)$ edge, $(d)$ vertex cubes}
	\label{ma}
\end{figure}

We also notice that recently, in \cite{Rbit}, QIs based on the same box spline $B$ have been proposed, but they are defined on the whole space $\RR^3$. Therefore, if only a finite number of volume data is available, such schemes can reconstruct only a portion of the volumetric object.

The coefficient functionals $\lambda_\alpha (f)$ in (\ref{oQI}) have the following expression
\begin{equation}\label{ll}
\lambda_\alpha(f) =\sum_{\beta \in F_{\alpha}}\sigma_{\alpha}(\beta)f_\beta, 
\end{equation}
where the finite set of points $\left\{ M_\beta,\ \beta \in F_{\alpha},\ F_{\alpha} \subset \mathcal{A}^M\right\}$ lies in some neighbourhood of the support of $B_\alpha \cap \Omega$ and $\{\sigma_{\alpha}(\beta)\}$ are real numbers, obtained so that $Qf\equiv f$ for all $f$ in $\PP_3$.

In the definition of the functionals, we consider a number of data points greater than the number of conditions we are imposing, in order to obtain a system of equations with free parameters, that we choose by minimizing an upper bound for the infinity norm of the operator $Q$.

\subsection{A general presentation of the optimization process}
 
From (\ref{ll}), it is clear that, for $\|f\|_\infty \leq 1$ and $\alpha \in \mathcal{A}$, $\left|\lambda_{\alpha}(f)\right|\leq \|\sigma_{\alpha}\|_1$, where $\sigma_{\alpha}$ is the vector with components $\sigma_{\alpha}(\beta)$. Therefore, since the scaled translates of $B$ form a partition of unity \cite{bhs}, we deduce immediately 
\begin{equation} \label{nn}
\|Q \|_\infty\leq\max_{\alpha \in \mathcal{A}}\left|\lambda_{\alpha}(f)\right|\leq \max_{\alpha \in \mathcal{A}}\|\sigma_{\alpha}\|_1.
\end{equation}
Now, we can try to find a solution of the minimization problem
\begin{equation}\label{sys}
\min\left\{\|\sigma_{\alpha}\|_1: \sigma_\alpha \in \RR^{{\rm card}(F_{\alpha})}, V_{\alpha}\sigma_{\alpha}=b_{\alpha}\right\},
\end{equation}
where $ V_{\alpha}\sigma_{\alpha}=b_{\alpha}$ is the linear system expressing that $Q$ is exact on $\PP_3$. In our case, in order to obtain such a system, we require that, for $f\in \PP_3$, each coefficient functional coincides with the corresponding one of the differential quasi-interpolating operator \cite{Rbit}, constructed by the general method proposed in \cite{dm} and exact on $\PP_3$
$$
\widehat{Q}f = \displaystyle\sum_{\alpha \in \ZZ^3} \left(I-\frac{5}{24}h^2\Delta +\frac{3}{128}h^4\Delta^2 \right)f(C_\alpha) B_{\alpha}.
$$

This is an $l_1$-minimization problem and there are many well-known techniques for approximating the solution, not unique in general (see e.g. \cite[Chap. 6]{Wa}). Since the minimization problem is equivalent to a linear programming one, here we use the simplex method.

\subsection{Choice of data points used in coefficient functionals}
Now, we explain the logic behind our approach, proposing a general method for the construction of coefficient functionals (\ref{ll}).

Fixed the scaled translate box spline $B_\alpha=B_{i,j,k}$, for any $n \geq 1$ we define $\Lambda^n_{\alpha}=\Lambda^n_{i,j,k}$ the octahedron centered at the point $C_{i,j,k}$, given in (\ref{center}), with vertices $C_{i\pm n,j,k}$, $C_{i,j\pm n,k}$, $C_{i,j,k\pm n}$. We denote by $\bar{\Lambda}^n_{\alpha}=\bar{\Lambda}^n_{i,j,k}$ the set of points $\Lambda^n_{i,j,k}\cap \left[\left(\ZZ-\frac12\right)h\right]^3$ (see Fig. \ref{los} for $n=2$ and $3$). 

The choice of octahedron shapes can be considered a 3D generalization of the rhombic one used in \cite{BISS,I} for the construction of 2D near-best QIs.

\begin{figure}[ht]
\begin{minipage}{60mm}
\centering\includegraphics[width=4cm]{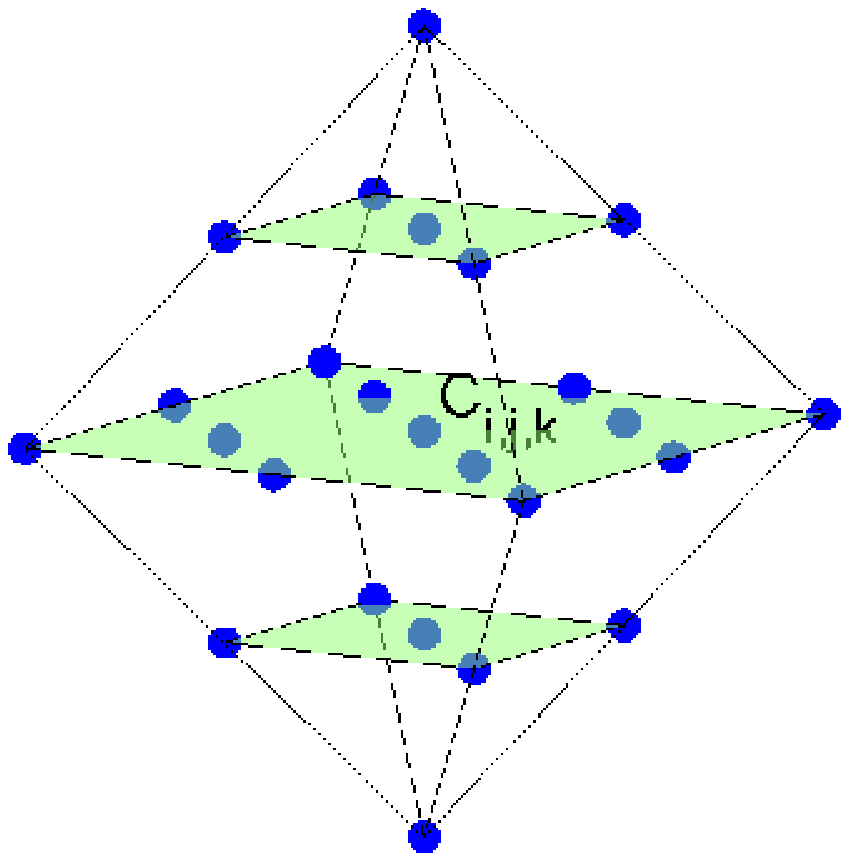}
\vskip2cm
\centerline{$(a)$}
\end{minipage}
\hfil
\begin{minipage}{60mm}
\centering\includegraphics[width=6cm]{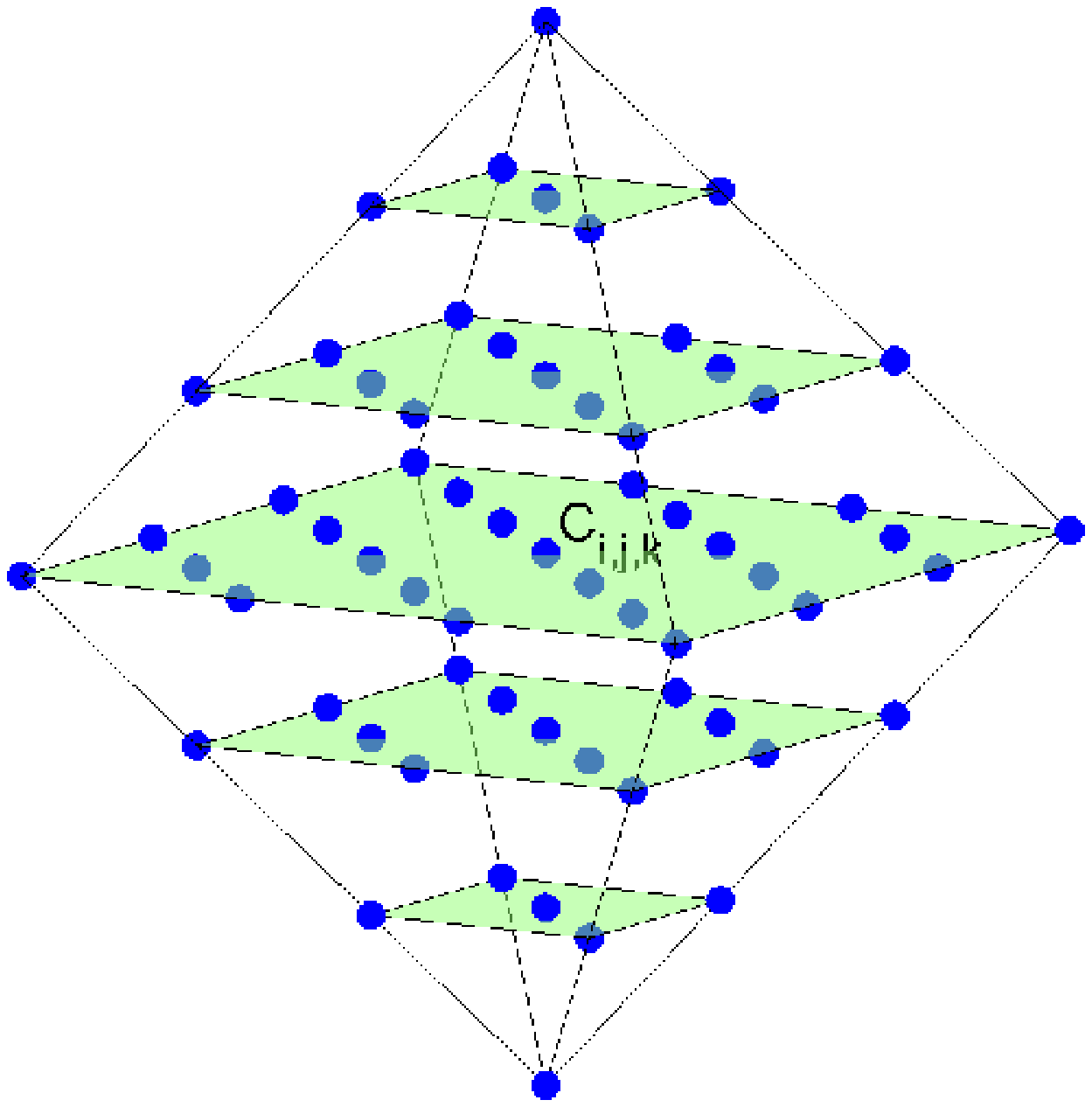}
\centerline{$(b)$}
\end{minipage}
\caption{The octahedron $\Lambda^n_{i,j,k}$ with the corresponding points $\bar{\Lambda}^n_{i,j,k}$, for $(a)$ $n=2$ and $(b)$ $n=3$} 
	\label{los}
\end{figure}

If, for a fixed $n$, we used the points in $\bar{\Lambda}^n_{\alpha}$, as data points in the definition (\ref{ll}) of the coefficient functional $\lambda_\alpha (f)$, some of them could lie outside $\Omega$. Therefore, in order to have all data points inside or on the boundary of $\Omega$, we project them on $\partial \Omega$, obtaining a new set that we still call $\bar{\Lambda}^n_{\alpha}$. We remark that $\bar{\Lambda}^n_{\alpha} \subset  \mathcal{D}$, defined in (\ref{Dset}).

Then, for the construction of $\lambda_\alpha (f)$, we impose the exactness of $Q$ on $\PP_3$, getting twenty conditions (or less than twenty in case of symmetry). Thus, we choose a starting value for $n$, we construct a scheme for the coefficient functional based on the data point set $\bar{\Lambda}^n_{\alpha}$ and containing at least twenty unknown parameters (or less than twenty in case of symmetry). We solve the resulting linear system and, in case of free parameters, we minimize the norm $\|\sigma_{\alpha}\|_1$. 

In order to reduce such a norm, we increase the value of $n$ and we repeat the process above explained.

\subsection{Detailed presentation of a particular case}

We describe this method in detail in the case $\alpha=(0,0,-1)$.

We impose $\lambda_{0,0,-1}(f)\equiv (I-\frac{5}{24}h^2\Delta+\frac{3}{128}h^4\Delta^2)f(C_{0,0,-1})$, for all monomials of $\PP_3$, i.e. for $f\equiv 1$, $x$, $y$, $z$, $x^2$, $y^2$, $z^2$, $xy$, $xz$, $yz$, $x^3$, $y^3$, $z^3$, $x^2y$, $xy^2$, $x^2z$, $xz^2$, $y^2z$, $yz^2$, $xyz$. Due to the symmetry of the support of $B_{0,0,-1}$ with respect to the plane $y=x$, we have only 13 conditions to impose. Indeed the monomials $y$, $y^2$, $yz$, $xy^2$, $y^2z$, $yz^2$ and $y^3$ can be excluded. 

If we consider the data points in $\bar{\Lambda}^n_{0,0,-1}$,  $n=1$, 2, 3 (with the points lying outside $\Omega$ projected on $\partial \Omega$), the corresponding systems have not solution. With $n=4$, taking into account the symmetry, we obtain a coefficient functional of the form 
$$
\begin{array}{ll}
\lambda_{0,0,-1}(f)= & \sigma_{0,0,-1}(0,0,0)f_{0,0,0} + \sigma_{0,0,-1}(1,0,0)(f_{1,0,0}+f_{0,1,0}) + \\
                     & \sigma_{0,0,-1}(2,0,0)(f_{2,0,0}+f_{0,2,0}) + \sigma_{0,0,-1}(3,0,0)(f_{3,0,0}+f_{0,3,0}) +\\
                     & \sigma_{0,0,-1}(4,0,0)(f_{4,0,0}+f_{0,4,0}) + \sigma_{0,0,-1}(1,1,0)f_{1,1,0} + \\
                     & \sigma_{0,0,-1}(2,1,0)(f_{2,1,0}+f_{1,2,0}) + \sigma_{0,0,-1}(3,1,0)(f_{3,1,0}+f_{1,3,0}) + \\
                     & \sigma_{0,0,-1}(2,2,0)f_{2,2,0} + \sigma_{0,0,-1}(0,0,1)f_{0,0,1} + \\
                     & \sigma_{0,0,-1}(1,0,1)(f_{1,0,1}+f_{0,1,1}) + \sigma_{0,0,-1}(2,0,1)(f_{2,0,1}+f_{0,2,1}) + \\
                     & \sigma_{0,0,-1}(1,1,1)f_{1,1,1} + \sigma_{0,0,-1}(0,0,2)f_{0,0,2} + \\
                     & \sigma_{0,0,-1}(1,0,2)(f_{1,0,2}+f_{0,1,2}) + \sigma_{0,0,-1}(0,0,3)f_{0,0,3}.
\end{array}
$$

\begin{figure}[ht]
\centering\includegraphics[width=9cm]{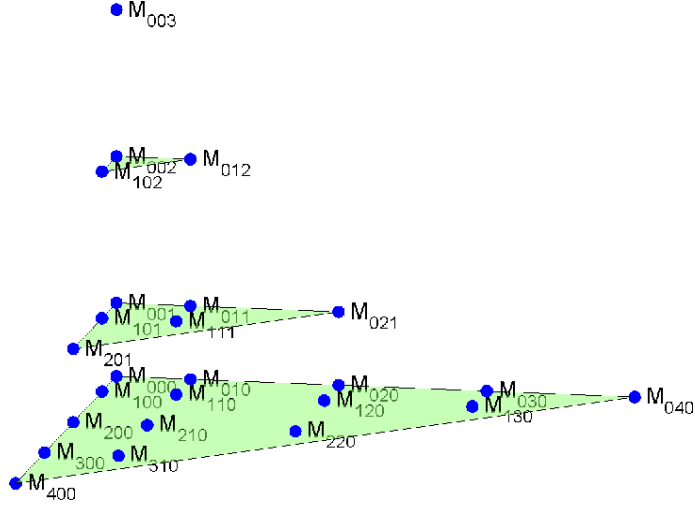}
\caption{The data point set $\bar{\Lambda}^4_{0,0,-1}$}
    \label{l4}
\end{figure}

In Fig. \ref{l4}, the data point set $\bar{\Lambda}^4_{0,0,-1}$ is provided. By this choice of $\lambda_{0,0,-1}(f)$, we obtain a linear system whose solution has three free parameters, that, from (\ref{sys}), we determine by minimizing $\| \sigma_{0,0,-1}\|_1$, getting the values
$$
\begin{array}{l}
\sigma_{0,0,-1}(0,0,0)=26956/945, \; \sigma_{0,0,-1}(1,0,0)=-331/36,\\
\sigma_{0,0,-1}(2,0,0)=-181/216, \; \sigma_{0,0,-1}(3,0,0)=0, \; \sigma_{0,0,-1}(4,0,0)=11/504,\\
\sigma_{0,0,-1}(1,1,0)=0, \; \sigma_{0,0,-1}(2,1,0)=25/18, \; \sigma_{0,0,-1}(3,1,0)=0 , \\
\sigma_{0,0,-1}(2,2,0)=-13/27, \; \sigma_{0,0,-1}(0,0,1)=-827/24, \; \sigma_{0,0,-1}(1,0,1)=35/3, \\
\sigma_{0,0,-1}(2,0,1)=-1/6 , \; \sigma_{0,0,-1}(1,1,1)=-3, \; \sigma_{0,0,-1}(0,0,2)=337/36,\\
\sigma_{0,0,-1}(1,0,2)=-31/18, \; \sigma_{0,0,-1}(0,0,3)=-151/120,
\end{array}
$$
and $\| \sigma_{0,0,-1}\|_1 \approx 127.1$. In order to construct a functional with a smaller upper bound for its norm, we consider increasing values for $n$ and the corresponding data points in $\bar{\Lambda}^n_{0,0,-1}$. Then, we solve the system and we minimize $\| \sigma_{0,0,-1}\|_1$. In the first row of Table \ref{tabnorm} the values of $\| \sigma_{0,0,-1}\|_1$ for $n \leq 11$ are shown. We notice that $\| \sigma_{0,0,-1}\|_1$ reduces when more data points are added, i.e. for increasing values for $n$, but this reduction slows down as $n$ increases. 

\bigskip

Using the same logic, we get the other coefficient functionals: we start with an initial $\bar{\Lambda}^n_{\alpha}$ such that the system has solution and, in order to reduce $\| \sigma_\alpha\|_1$, we consider increasing values of $n$. For each value we solve the corresponding system and we minimize the norm $\| \sigma_\alpha\|_1$.

\subsection{Discussion of the choice of $n$}
Since a large  number of QIs can be constructed, we have to choose a specific $n$ for each coefficient functional in (\ref{oQI}). For this choice we have taken into account the following remarks: since the norm reduction slows down as $n$ increases, we have to strike a balance between $n$ and the value $\| \sigma_\alpha\|_1$, in order to keep the data points in a neighbourhood of the support of $B_\alpha \cap \Omega$. 

Therefore, following this approach and taking into account the values of $\| \sigma_\alpha\|_1$ and $n$, given in Table \ref{tabnorm}, we define a particular QI, whose coefficient functionals are given in Tables \ref{tab-3}, \ref{tab-4}, \ref{tab1} and \ref{tab12}. We remark that, for this particular choice of the coefficient functionals, we need $m_1, m_2, m_3 \geq 11$ to be able to construct them.

We have only defined the coefficient functionals associated with the $B_\alpha$'s whose centers of the supports are close to either the origin $O=(0,0,0)$ or the edge $OT$ of $\Omega$, with $T=(m_1h,0,0)$, because the other ones can be obtained from them by symmetry.

\begin{table}[ht]
	\caption{Values of $\| \sigma_\alpha\|_1$, for increasing values of $n$}
\begin{tabular}{|c||c|c|c|c|c|c|c|c|c|c|c|}
\hline   &  \multicolumn{11}{c|}{$n$}  \\
\hline  $(i,j,k)$ & 1 & 2 & 3 & 4 & 5 & 6 & 7 & 8 & 9 & 10 & 11 \\
\hline 
$(0,0,-1)$ & -- & -- & -- & 127.1 & 55.27 & 29.28 & 20.13 & 15.37 & 12.37 & 10.25 & 8.774\\
\hline 
$(1,0,-1)$ & -- & -- & -- & 68.69 & 34.37 & 19.71 & 14.07 & 11.01 & 9.099 & 7.684 & 6.672\\
\hline 
$(2,0,-1)$ & -- & -- & -- & 68.69 & 34.37 & 19.71 & 14.07 & 11.01 & 9.099 & 7.684 &  6.672\\
\hline 
$(1,1,-1)$ & -- & -- & -- & 32.44 & 19.78 & 12.59 & 9.386 & 7.523 & 6.439 & 5.561 & 4.911 \\
\hline 
$(2,1,-1)$ & -- & -- & -- & 32.44 & 19.78 & 12.59 & 9.386 & 7.523 & 6.439 & 5.561 & 4.911  \\
\hline 
$(2,2,-1)$ & -- & -- & -- & 32.44 & 19.78 & 12.59 & 9.386 & 7.523 & 6.439 &  5.561 & 4.911 \\
\hline 
\hline 
$(0,0,0)$ & -- & -- & 30.09 & 17.35 & 10.31 & 7.740 & 6.486 & 5.639 & 4.945 & &\\
\hline 
$(1,0,0)$ & -- & -- & 11.04 & 7.649 & 5.492 & 4.463 & 3.928 & 3.570 & 3.237 & &\\
\hline 
$(2,0,0)$ & -- & -- & 9.945 & 7.649 & 5.435 & 4.443 & 3.928 & 3.569 & 3.237 & &\\
\hline 
$(3,0,0)$ & -- & -- & 9.945 & 7.649 & 5.435 & 4.443 & 3.928 & 3.569 & 3.237 & &\\
\hline 
$(1,1,0)$ & -- & -- & 5.508 & 3.787 & 3.077 & 2.621 & 2.318 & 2.143 & 2.009 & &\\
\hline 
$(2,1,0)$ & -- & -- & 5.108 & 3.518 & 2.912 & 2.502 & 2.251 & 2.113 & 1.983 & &\\
\hline 
$(3,1,0)$ & -- & -- & 5.048 & 3.469 & 2.880 & 2.477 & 2.247 & 2.109 & 1.981 & &\\
\hline
$(2,2,0)$ & -- & -- & 4.129 & 3.128 & 2.665 & 2.389 & 2.194 & 2.069 & 1.950 & &\\
\hline 
$(3,2,0)$ & -- & -- & 4.028 & 3.102 & 2.649 & 2.357 & 2.178 & 2.064 & 1.944 & &\\
\hline 
$(4,2,0)$ & -- & -- & 3.994 & 3.081 &  2.648 & 2.350 & 2.175 & 2.064 & 1.943 & &\\
\hline 
$(3,3,0)$ & -- & -- & 3.806 & 3.077 &  2.617 & 2.339 & 2.161 & 2.059 & 1.941 & &\\
\hline 
\hline 
$(1,1,1)$ & -- & 4.5   & 2.875 & 2.271 & 1.956 & 1.730 & 1.498 &  & & &\\
\hline 
$(2,1,1)$ & -- & 3.75  & 2.582 & 2.124 & 1.790 & 1.565 & 1.424 & & & &\\
\hline 
$(3,1,1)$ & -- & 3.542 & 2.536 & 2.114 & 1.771 & 1.526 & 1.380 & & & &\\
\hline 
$(2,2,1)$ & -- & 3.167 & 2.370 & 1.867 & 1.585 & 1.417 & 1.327 & & & & \\
\hline 
\hline 
$(2,2,2)$ & 3.5 & 2.25 & 1.732 & 1.494 & 1.384 & 1.297 & 1.244 & & & & \\
\hline 
\hline 
$(3,3,3)$ & 3.5 & 1.625 & 1.375 & 1.313 & 1.232 & 1.186 & 1.162 &  & & & \\
\hline 
\hline 
\end{tabular}
	\label{tabnorm}
\end{table}

\begin{table}[ht]
	\caption{Coefficient functionals $\lambda_{i,j,k}$ with the corresponding values of $n$ and $\|\sigma_{i,j,k}\|_1$ for $k=-1$}
\renewcommand{\arraystretch}{1.3}
\centering\begin{tabular}{|c|l|c|c|}
\hline 
$(i,j,k)$   &   \multicolumn{1}{|c|}{$\lambda_{i,j,k} (f)$} & $n$ & $\Vert \sigma_{i,j,k}\Vert_1$  \\
\hline \multicolumn{4}{|c|}{$k=-1$}\\
\hline 
\rule{0pt}{4ex} $(0,0,-1)$ & $\frac{5720029937968}{1777075925625}f_{0,0,0}-\frac{17625172171}{30540510000}(f_{3,0,0}+f_{0,3,0})+\frac{5091473}{125966750}f_{4,4,0}$ & 11 & 8.774\\
 & $-\frac{49957799237}{1496484990000}(f_{11,0,0}+f_{0,11,0})+\frac{42683993}{462735000}(f_{8,1,0}+f_{1,8,0})$      &  & \\
 & $-\frac{51197831}{3054051000} (f_{6,5,0}+f_{5,6,0})-\frac{323423}{157500}f_{0,0,3}+\frac{371}{1800}(f_{5,0,3}+f_{0,5,3})$ &  &   \\
 & $-\frac{3}{175}f_{3,3,4}-\frac{26}{165}(f_{4,0,6}+f_{0,4,6})+\frac{155}{312}(f_{1,0,7}+f_{0,1,7})$     &  & \\
 & $+\frac{557}{15000}f_{0,0,8}-\frac{6553}{26600}f_{0,0,10}$   &  &    \\
\hline 
\rule{0pt}{4ex} $(1,0,-1)$& $\frac{17446153}{20540520}f_{0,0,0}+\frac{7677660701}{3308104800}f_{1,0,0}+\frac{2896225}{6918912}f_{2,0,0}+\frac{772241}{5915669760}f_{10,0,0}$ & 9 &  9.099\\
				 & $-\frac{4139}{9072}f_{0,3,0}-\frac{109793}{453600}f_{2,3,0}+\frac{3743}{39312}f_{0,7,0}+\frac{16889}{157248}f_{1,7,0}$ & & \\
         & $+\frac{3041}{157248}f_{3,7,0}-\frac{473}{5712}f_{1,9,0}-\frac{815}{432}f_{0,0,2}-\frac{3997}{4320}f_{2,0,2}$ & & \\
         & $+\frac{1}{12}f_{1,3,3}+\frac{13}{100}f_{2,3,3}+\frac{13}{42}f_{0,2,4}-\frac{53}{270}f_{1,3,5}$ & & \\
         & $+\frac{18103}{39600}f_{0,0,6}+\frac{805}{3168}f_{2,0,6}+\frac{59}{13200}f_{3,0,6}-\frac{937}{3600}f_{1,0,8}$ & & \\
\hline 
\rule{0pt}{4ex} $(2,0,-1)$ & $ \frac{722869}{772200}(f_{1,0,0}+f_{3,0,0})+\frac{78797}{45900}f_{2,0,0}-\frac{15083}{43200}(f_{1,3,0}+f_{3,3,0})$ & 9 &  9.099 \\
					 & $+\frac{277}{2496}(f_{1,7,0}+f_{3,7,0})-\frac{473}{5712}f_{2,9,0}-\frac{4049}{2880}(f_{1,0,2}+f_{3,0,2})$ & &\\
					 & $+\frac{1859}{43200}(f_{1,3,3}+f_{3,3,3})+\frac{2749}{21600}f_{2,3,3}+\frac{853}{8640}(f_{1,2,4}+f_{3,2,4})$ & & \\
					 & $+\frac{3389}{30240}f_{2,2,4}-\frac{53}{270}f_{2,3,5}+\frac{3779}{10560}(f_{1,0,6}+f_{3,0,6})-\frac{937}{3600}f_{2,0,8}$ & & \\
\hline 
\rule{0pt}{4ex} $(1,1,-1)$ & $\frac{101}{2430}(f_{1,0,0}+f_{0,1,0})+\frac{12995}{4158}f_{1,1,0}+\frac{101}{34020}(f_{8,1,0}+f_{1,8,0})$ & 7& 9.386\\
           & $-\frac{538}{675}f_{0,0,2}-\frac{7}{54}(f_{2,0,2}+f_{0,2,2})-\frac{293}{2700}(f_{3,0,2}+f_{0,3,2})$      &  & \\
           & $-\frac{239}{144}f_{1,1,2}-\frac{7}{72}(f_{2,1,2}+f_{1,2,2})-\frac{199}{5400}f_{3,3,2}$ &  &   \\
           & $+\frac{641}{972}(f_{1,0,5}+f_{0,1,5})+\frac{641}{1944}(f_{2,1,5}+f_{1,2,5})-\frac{181}{176}f_{1,1,6}$     &  & \\
\hline 
\rule{0pt}{4ex} $(2,1,-1)$ & $ \frac{1}{156}(f_{1,0,0}+f_{3,0,0})+\frac{881}{1296}(f_{1,1,0}+f_{3,1,0})+\frac{81}{44}f_{2,1,0}$ & 7 &  9.386 \\
					 & $+\frac{1}{1872}(f_{1,7,0}+f_{3,7,0})-\frac{43}{72}(f_{1,0,2}+f_{3,0,2})$ & &\\
					 & $-\frac{119}{216}(f_{1,1,2}+f_{3,1,2})-\frac{13}{48}f_{2,1,2}-\frac{43}{144}(f_{1,2,2}+f_{3,2,2})$ & & \\
					 & $+\frac{7}{12}f_{2,0,5}+\frac{715}{1296}(f_{1,1,5}+f_{3,1,5})+\frac{7}{24}f_{2,2,5}-\frac{181}{176}f_{2,1,6}$ & & \\          
\hline 
\rule{0pt}{4ex} $(2,2,-1)$ & $\frac{1492}{663}f_{2,2,0}-\frac{5}{12}(f_{1,2,3}+f_{2,1,3}+f_{3,2,3}+f_{2,3,3})-\frac{19}{96}f_{2,2,3}$ & 10 & 5.561\\
                  & $+\frac{5}{24}(f_{1,2,7}+f_{2,1,7}+f_{3,2,7}+f_{2,3,7})+\frac{245}{1248}f_{2,2,7}-\frac{113}{272}f_{2,2,9}$      &  & \\
\hline 
\end{tabular}
	\label{tab-3}
\end{table}

\begin{table}[ht]
	\caption{Coefficient functionals $\lambda_{i,j,k}$ with the corresponding values of $n$ and $\|\sigma_{i,j,k}\|_1$ for $k=0$}
\renewcommand{\arraystretch}{1.3}
\centering\begin{tabular}{|c|l|c|c|}
\hline $(i,j,k)$   &   \multicolumn{1}{|c|}{$\lambda_{i,j,k} (f)$} & $n$ & $\Vert \sigma_{i,j,k}\Vert_1$  \\
\hline \multicolumn{4}{|c|}{$k=0$}\\
\hline 
\rule{0pt}{4ex} $(0,0,0)$ & $\frac{174511}{59400}f_{0,0,0}-\frac{1243}{1350}(f_{2,0,0}+f_{0,2,0}+f_{0,0,2})$ & 6 & 7.740\\
          & $-\frac{43}{990}(f_{6,0,0}+f_{0,6,0}+f_{0,0,6})+\frac{2987}{28800}(f_{2,3,0}+f_{3,2,0}+f_{3,0,2}$ & & \\
          & $+f_{0,3,2}+f_{2,0,3}+f_{0,2,3})-\frac{11}{75}(f_{3,3,0}+f_{3,0,3}+f_{0,3,3})$      &  & \\
          & $+\frac{259}{1920}(f_{4,1,0}+f_{1,4,0}+f_{4,0,1}+f_{0,4,1}+f_{1,0,4}+f_{0,1,4})-\frac{1}{27}f_{2,2,2}$ &  &   \\                  
\hline 
\rule{0pt}{4ex} $(1,0,0)$ & $\frac{92}{405}f_{0,0,0}+\frac{1301}{432}f_{1,0,0}+\frac{13}{108}f_{2,0,0}+\frac{5}{1296}f_{5,0,0}$ & 4 & 7.649\\
          & $-\frac{155}{216}(f_{0,1,0}+f_{0,0,1})-\frac{1}{36}(f_{0,2,0}+f_{0,0,2})$ &  & \\
          & $-\frac{25}{54}(f_{1,2,0}+f_{1,0,2})-\frac{41}{108}(f_{2,1,0}+f_{2,0,1})$ &  & \\
          & $+\frac{23}{360}(f_{0,3,0}+f_{0,0,3})+\frac{1}{36}(f_{2,3,0}+f_{2,0,3})$ & & \\
          & $+\frac{7}{27}(f_{0,2,1}+f_{0,1,2})+\frac{7}{54}(f_{2,2,1}+f_{2,1,2})-\frac{4}{27}f_{1,2,2}$ &  &   \\ 
\hline 
\rule{0pt}{4ex} $(2,0,0)$& $\frac{106}{495}f_{0,0,0}+\frac{1115}{432}f_{2,0,0}+\frac{101}{180}f_{3,0,0}+\frac{53}{7920}f_{6,0,0}$ & 4 & 7.649\\
          & $-\frac{61}{144}(f_{1,1,0}+f_{1,0,1})-\frac{97}{144}(f_{3,1,0}+f_{3,0,1})$ &  & \\
          & $-\frac{1}{6}(f_{1,2,0}+f_{1,0,2})-\frac{35}{108}(f_{2,2,0}+f_{2,0,2})$ &  & \\
          & $+\frac{17}{240}(f_{1,3,0}+f_{1,0,3})+\frac{1}{48}(f_{3,3,0}+f_{3,0,3})$ & & \\
          & $+\frac{7}{36}(f_{1,2,1}+f_{3,2,1}+f_{1,1,2}+f_{3,1,2})-\frac{4}{27}f_{2,2,2}$ &  &   \\         
\hline 
\rule{0pt}{4ex} $(3,0,0)$& $\frac{697}{180}f_{3,0,0}+\frac{1}{24}(f_{2,0,0}+f_{4,0,0})$ & 3 & 9.945\\
          & $-\frac{11}{24}(f_{2,1,0}+f_{2,0,1}+f_{4,1,0}+f_{4,0,1})-\frac{77}{72}(f_{3,1,0}+f_{3,0,1})$ &  & \\
          & $-\frac{7}{36,}(f_{3,2,0}+f_{3,0,2})+\frac{11}{120}(f_{3,3,0}+f_{3,0,3})$ &  & \\
          & $+\frac{2}{3}(f_{2,1,1}+f_{4,1,1})-\frac{1}{18}(f_{3,2,1}+f_{3,1,2})$ & & \\
\hline 
\rule{0pt}{4ex} $(1,1,0)$& $-\frac{16}{33}f_{0,0,0}-\frac{14}{99}(f_{3,0,0}+f_{0,3,0})+\frac{38}{15}f_{1,1,0}+\frac{1}{11}(f_{2,1,0}+f_{1,2,0})$ & 3 & 5.508\\
                  & $-\frac{4}{99}(f_{3,2,0}+f_{2,3,0}+f_{2,0,1}+f_{0,2,1})-\frac{23}{88}f_{1,1,1}$ & & \\
                  & $-\frac{17}{44}(f_{2,1,1}+f_{1,2,1})+\frac{59}{264}(f_{3,1,1}+f_{1,3,1})-\frac{4}{99}f_{2,2,1}$      &  & \\
                  & $-\frac{1}{4}f_{1,1,2}+\frac{11}{120}f_{1,1,3}$ &  &   \\
\hline
\end{tabular}
	\label{tab-4}
\end{table}

\begin{table}[ht]
	\caption{Coefficient functionals $\lambda_{i,j,k}$ with the corresponding values of $n$ and $\|\sigma_{i,j,k}\|_1$ for $k=0$}
\renewcommand{\arraystretch}{1.3}
\centering\begin{tabular}{|c|l|c|c|}
\hline $(i,j,k)$   &   \multicolumn{1}{|c|}{$\lambda_{i,j,k} (f)$} & $n$ & $\Vert \sigma_{i,j,k}\Vert_1$  \\
\hline \multicolumn{4}{|c|}{$k=0$}\\
\hline
\rule{0pt}{4ex} $(2,1,0)$  & $-\frac{188}{945}f_{0,0,0}-\frac{8}{63}f_{4,0,0}+\frac{37}{405}f_{0,1,0}+\frac{1043}{540}f_{2,1,0}+\frac{11}{360}f_{3,1,0}$ & 3 & 5.108 \\
           & $+\frac{5}{648}f_{5,1,0}+\frac{43}{108}f_{2,2,0}+\frac{1}{36}f_{4,2,0}-\frac{5}{36}f_{1,3,0}-\frac{7}{45}f_{3,3,0}-\frac{46}{135}f_{2,0,1}$  &  &  \\
			  	 & $+\frac{5}{63}f_{0,1,1}+\frac{5}{84}f_{4,1,1}-\frac{91}{108}f_{2,2,1}+\frac{121}{360}f_{2,3,1}-\frac{1}{4}f_{2,1,2}+\frac{11}{120}f_{2,1,3}$ & & \\
\hline 
\rule{0pt}{4ex} $(3,1,0)$  & $-\frac{29}{216}(f_{1,0,0}+f_{5,0,0})-\frac{41}{1080}(f_{2,0,0}+f_{4,0,0})+\frac{29}{384}(f_{1,1,0}+f_{5,1,0})$ & 3 & 5.048 \\
           & $+\frac{1867}{960}f_{3,1,0}+\frac{29}{72}f_{3,2,0}-\frac{23}{160}(f_{2,3,0}+f_{4,3,0})-\frac{29}{90}f_{3,0,1}$  &  &  \\
			  	 & $+\frac{5}{96}(f_{1,1,1}+f_{5,1,1})-\frac{59}{72}f_{3,2,1}+\frac{79}{240}f_{3,3,1}-\frac{1}{4}f_{3,1,2}+\frac{11}{120}f_{3,1,3}$ & & \\
\hline  
\rule{0pt}{4ex} $(2,2,0)$ & $\frac{358}{165}f_{2,2,0}-\frac{10}{231}(f_{1,0,0}+f_{3,0,0}+f_{0,1,0}+f_{0,3,0})$ & 3  & 4.129 \\
                  & $-\frac{5}{154}(f_{4,1,0}+f_{4,3,0}+f_{1,4,0}+f_{3,4,0})-\frac{167}{264}f_{2,2,1}$      &  & \\
                  & $-\frac{5}{66}(f_{3,2,1}+f_{2,3,1})+\frac{5}{132}(f_{4,2,1}+f_{2,4,1})-\frac{21}{44}f_{2,2,2}$ & & \\
                  & $+\frac{5}{88}(f_{1,2,2}+f_{2,1,2}+f_{3,2,2}+f_{2,3,2})+\frac{11}{120}f_{2,2,3}$ &  &   \\
\hline 
\rule{0pt}{4ex} $(3,2,0)$  & $-\frac{460}{12033}f_{2,0,0}-\frac{860}{12033}f_{4,0,0}-\frac{25}{1146}(f_{1,1,0}+f_{5,3,0})$ & 3 & 4.028\\
           & $-\frac{135}{2674}f_{2,4,0}+\frac{6098}{2865}f_{3,2,0}-\frac{175}{18909}f_{6,2,0}-\frac{320}{18909}f_{0,2,0}$  &  &  \\
			  	 & $-\frac{85}{2674}f_{4,4,0}-\frac{1009}{1528}f_{3,2,1}-\frac{55}{573}f_{3,3,1}+\frac{55}{1146}f_{3,4,1}-\frac{3409}{6876}f_{3,2,2}$ & & \\
			  	 & $+\frac{245}{4584}(f_{3,1,2}+f_{3,3,2})+\frac{5}{72}(f_{2,2,2}+f_{4,2,2})+\frac{11}{120}f_{3,2,3}$      &  & \\
\hline 
\rule{0pt}{4ex} $(4,2,0)$  & $-\frac{5}{84}(f_{3,0,0}+f_{5,0,0})+\frac{193}{90}f_{4,2,0}-\frac{5}{144}(f_{1,2,0}+f_{7,2,0})$ & 3 & 3.994 \\
           & $-\frac{5}{112}(f_{3,4,0}+f_{5,4,0})-\frac{73}{96}f_{4,2,1}+\frac{5}{96}(f_{2,2,1}+f_{4,4,1}$ &  &  \\
           & $+f_{6,2,1}+f_{4,1,2}+f_{4,3,2})-\frac{5}{48}f_{4,3,1}-\frac{17}{48}f_{4,2,2}+\frac{11}{120}f_{4,2,3}$ & & \\
\hline 
\rule{0pt}{4ex} $(3,3,0)$& $\frac{85}{54}f_{3,3,0}-\frac{55}{1536}(f_{1,1,1}+f_{5,1,1}+f_{1,5,1}+f_{5,5,1})$ & 5 & 2.617\\
         & $-\frac{85}{144}f_{3,3,2}+\frac{5}{768}(f_{3,1,3}+f_{1,3,3}+f_{5,3,3}+f_{3,5,3})$ & & \\
         & $+\frac{5}{96}(f_{3,2,4}+f_{2,3,4}+f_{4,3,4}+f_{3,4,4})-\frac{259}{3456}f_{3,3,5}$ &  &   \\
\hline 
\end{tabular}
	\label{tab1}
\end{table}

\begin{table}[ht]
	\caption{Coefficient functionals $\lambda_{i,j,k}$ with the corresponding values of $n$ and $\|\sigma_{i,j,k}\|_1$ for $k=1,2,3$}
\renewcommand{\arraystretch}{1.3}
\centering\begin{tabular}{|c|l|c|c|}
\hline $(i,j,k)$   &   \multicolumn{1}{|c|}{$\lambda_{i,j,k} (f)$} & $n$ & $\Vert \sigma_{i,j,k}\Vert_1$  \\
\hline \multicolumn{4}{|c|}{$k=1$}\\
\hline 
\rule{0pt}{4ex} $(1,1,1)$& $\frac{41}{96}f_{1,1,1}+\frac{5}{18}(f_{2,1,1}+f_{1,2,1}+f_{1,1,2})$ & 6 & 1.730 \\
					 & $-\frac{35}{288}(f_{5,1,1}+f_{1,5,1}+f_{1,1,5})+\frac{5}{144}(f_{7,1,1}+f_{1,7,1}+f_{1,1,7})$& & \\
\hline 
\rule{0pt}{4ex} $(2,1,1)$ & $-\frac{4}{9}f_{2,0,0}-\frac{2}{9}(f_{2,2,0}+f_{2,0,2})-\frac{2}{21}f_{0,1,1}+\frac{55}{24}f_{2,1,1}-\frac{5}{168}f_{4,1,1}$ & 2 & 3.75\\
          & $-\frac{1}{12}(f_{3,1,1}+f_{2,2,1}+f_{2,1,2})+\frac{1}{24}(f_{2,3,1}+f_{2,1,3})-\frac{1}{9}f_{2,2,2}$      &  & \\
\hline 
\rule{0pt}{4ex} $(3,1,1)$ & $-\frac{4}{9}f_{3,0,0}-\frac{2}{9}(f_{3,2,0}+f_{3,0,2})-\frac{5}{96}(f_{1,1,1}+f_{5,1,1})+\frac{35}{16}f_{3,1,1}$ & 2 & 3.542\\
                  & $-\frac{1}{12}(f_{3,2,1}+f_{3,1,2})+\frac{1}{24}(f_{3,3,1}+f_{3,1,3})-\frac{1}{9}f_{3,2,2}$      &  & \\
\hline 
\rule{0pt}{4ex} $(2,2,1)$ & $-\frac{1}{7}f_{2,2,0}-\frac{1}{12}(f_{1,1,0}+f_{3,1,0}+f_{1,3,0}+f_{3,3,0})+\frac{13}{8}f_{2,2,1}$ & 3 & 2.370\\
          & $-\frac{1}{24}(f_{2,1,3}+f_{1,2,3}+f_{2,2,3}+f_{3,2,3}+f_{2,3,3})+\frac{5}{84}f_{2,2,4}$   &  & \\ 
\hline  \multicolumn{4}{|c|}{$k=2$}\\
\hline 
\rule{0pt}{4ex} $(2,2,2)$ & $\frac{9}{4}f_{2,2,2}-\frac{5}{24}(f_{2,1,2}+f_{2,3,2}+f_{1,2,2}+f_{3,2,2}+f_{2,2,1}+f_{2,2,3})$ & 1 & 3.5\\
\hline  \multicolumn{4}{|c|}{$k=3$}\\
\hline 
\rule{0pt}{4ex} $(3,3,3)$ & $\frac{21}{16}f_{3,3,3}-\frac{5}{96}(f_{3,1,3}+f_{3,5,3}+f_{1,3,3}+f_{5,3,3}+f_{3,3,1}+f_{3,3,5})$ & 2 & 1.625\\
\hline 
\end{tabular}
	\label{tab12}
\end{table}

\bigskip

Now we give an upper bound for the infinity norm of the proposed quasi-interpol\-ating operator and error estimates for it.

\begin{theorem}\label{tnorm_tri} For the operator $Q$, the following bound holds
\begin{equation}\label{nQ}
	\Vert Q \Vert_\infty \leq  9.945 .
\end{equation}
\end{theorem}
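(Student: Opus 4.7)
The plan is to invoke the norm bound (\ref{nn}), namely
$$
\|Q\|_\infty \leq \max_{\alpha \in \mathcal{A}} \|\sigma_\alpha\|_1,
$$
which holds because the scaled translates $\{B_\alpha\}$ form a partition of unity on $\Omega$. Once this is in place, the theorem reduces to bounding the maximum of $\|\sigma_\alpha\|_1$ over all indices $\alpha \in \mathcal{A}$.

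Next I would argue that, by the symmetries of $\Omega$, $\mathcal{T}_{\mathbf m}$ and the construction of the coefficient functionals, it suffices to consider those $\alpha=(i,j,k)$ whose support centers $C_\alpha$ lie near the origin $O$ or along the edge $OT$: every other boundary or interior functional is obtained by a reflection/rotation that preserves the $\ell_1$-norm of $\sigma_\alpha$. Moreover, functionals associated with strictly interior generators coincide with the differential quasi-interpolant of \cite{Rbit}, whose $\|\sigma_\alpha\|_1$ is known to be dominated by the largest value appearing in the boundary tables.

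Then I would simply read off the maxima from Tables \ref{tab-3}--\ref{tab12}, where for each $\alpha$ in the selected set the value $\|\sigma_\alpha\|_1$ is listed together with the chosen $n$. Scanning these columns, the worst value is attained at $\alpha=(3,0,0)$ (or, equivalently, at its symmetric images of type $(i,0,0)$ with $3\le i\le m_1-2$), giving $\|\sigma_{3,0,0}\|_1=9.945$, while all other entries are strictly below this number (the second worst, at $(2,0,-1)$ and $(1,0,-1)$, being $9.099$).

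The proof is therefore essentially a verification step rather than a computation: the only work is to confirm, for each tabulated $\sigma_\alpha$, that the absolute sum of the listed coefficients indeed equals the number in the last column, which is a routine arithmetic check and the expected place where careful bookkeeping is required. Combining the tabulated maximum with (\ref{nn}) yields (\ref{nQ}).
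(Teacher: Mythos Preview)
Your argument is correct and mirrors the paper's own proof: invoke the bound (\ref{nn}) and then read off $\max_\alpha\|\sigma_\alpha\|_1=\|\sigma_{3,0,0}\|_1=9.945$ from Tables \ref{tab-3}--\ref{tab12}, using the stated symmetries to cover all remaining indices. One small inaccuracy worth fixing: the strictly interior functionals are not the differential ones of \cite{Rbit} but the discrete near-best functionals listed in Table \ref{tab12} (e.g.\ $\lambda_{3,3,3}$ with $\|\sigma_{3,3,3}\|_1=1.625$), though this does not affect the bound.
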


\begin{proof} For $\Vert f \Vert_\infty \leq 1$, taking into account (\ref{nn}) and Tables \ref{tab-3} through \ref{tab12}, we get an upper bound for $\Vert Q \Vert_\infty$ by $\Vert \sigma_{3,0,0}\Vert_1$. Therefore, we obtain (\ref{nQ}). 
\end{proof}

Standard results in approximation theory \cite[Chap.3]{dhr} allow us to immediately deduce the following theorem.

\begin{theorem}\label{err_tri1} Let $f \in C^4(\Omega)$ and $\vert \gamma \vert=0,1,2,3$ with $\gamma=(\gamma_1,\gamma_2,\gamma_3)$, $\vert \gamma \vert=\gamma_1+\gamma_2+\gamma_3$. Then there exist constants $K_\gamma>0$, independent on $h$, such that
$$
\left\|D^\gamma(f-Qf)\right\|_\infty \leq K_\gamma h^{4-\vert \gamma \vert} \max_{\vert \beta \vert =4} \left\|D^{ \beta }f\right\|_\infty,
$$
where  $D^{\beta}=D^{\beta_1\beta_2\beta_3} =\frac{\partial^{\vert \beta \vert}}{\partial x^{\beta_1}\partial y^{\beta_2}\partial z^{\beta_3}}$.
\end{theorem}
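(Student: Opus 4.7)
The plan is to follow the standard Bramble--Hilbert / Strang--Fix style argument for bounded quasi-interpolants, localized to a single tetrahedron. Fix $x \in \Omega$ and let $T \in \mathcal{T}_{\mathbf{m}}$ be the tetrahedron containing it. Because the box spline $B$ has compact support, only a uniformly bounded number (independent of $h$ and of the position of $T$ in $\Omega$) of indices $\alpha$ satisfy $\mathrm{supp}(B_\alpha) \cap T \neq \emptyset$; call this set $\mathcal{A}(T)$. Moreover, by the construction in Tables \ref{tab-3}--\ref{tab12}, each functional $\lambda_\alpha$ with $\alpha \in \mathcal{A}(T)$ uses data values $f_\beta$ at points $M_\beta$ lying in a neighbourhood $\Omega_T$ of $T$ whose diameter is $O(h)$.

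Next I would exploit polynomial reproduction. Since $\PP_3 \subset S_4^2(\Omega,\mathcal{T}_{\mathbf{m}})$ and, by construction, $Qp = p$ for every $p \in \PP_3$, for any such $p$ one has
$$
D^\gamma(f-Qf)(x) = D^\gamma(f-p)(x) - D^\gamma \bigl(Q(f-p)\bigr)(x).
$$
Choose $p = T_3 f$, the Taylor polynomial of $f$ of degree $3$ expanded at $x$ (or at the centroid of $T$). Taylor's theorem with integral remainder gives
$$
\|D^\gamma(f-p)\|_{\infty,\Omega_T} \leq C_\gamma\, h^{4-|\gamma|} \max_{|\beta|=4}\|D^\beta f\|_\infty,
$$
which handles the first term directly.

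For the second term I would write
$$
D^\gamma Q(f-p)(x) = \sum_{\alpha \in \mathcal{A}(T)} \lambda_\alpha(f-p)\, D^\gamma B_\alpha(x),
$$
and use three ingredients: (i) the scaling $B_\alpha(\cdot) = B((\cdot - \text{shift})/h)$, which yields $\|D^\gamma B_\alpha\|_\infty \leq c_\gamma\, h^{-|\gamma|}$ with $c_\gamma$ depending only on $B$; (ii) the $\ell_1$ bound on the coefficient weights, namely $|\lambda_\alpha(f-p)| \leq \|\sigma_\alpha\|_1\, \|f-p\|_{\infty,\Omega_T} \leq 9.945\, \|f-p\|_{\infty,\Omega_T}$ by Theorem \ref{tnorm_tri}; and (iii) the Taylor remainder estimate $\|f-p\|_{\infty,\Omega_T} \leq C h^{4} \max_{|\beta|=4}\|D^\beta f\|_\infty$. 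Since $\#\mathcal{A}(T)$ is bounded by an absolute constant, combining these yields
$$
|D^\gamma Q(f-p)(x)| \leq \widetilde{K}_\gamma\, h^{4-|\gamma|} \max_{|\beta|=4}\|D^\beta f\|_\infty.
$$
Adding the two contributions and taking the supremum over $x \in \Omega$ gives the claimed estimate.

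The only delicate point is the uniform (in $h$ and in the position of $T$) control of the constants for \emph{boundary} tetrahedra, since the boundary coefficient functionals of Tables \ref{tab-3}--\ref{tab12} are not translates of interior ones; however, there are only finitely many boundary patterns (inner, face, edge, vertex cubes, as in Fig.\ \ref{ma}), and for each one the corresponding $\|\sigma_\alpha\|_1$ is listed explicitly and finite, so a uniform constant is obtained by taking the maximum over these finitely many cases. Everything else is the standard argument found in \cite[Chap.~3]{dhr}, which is why the authors invoke it directly.
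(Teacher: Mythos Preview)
Your argument is correct and is precisely the standard local Bramble--Hilbert / Taylor-remainder argument that the paper invokes by citing \cite[Chap.~3]{dhr}; the paper itself gives no proof beyond that reference, so what you have written is an explicit unpacking of the cited result rather than a different approach.
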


\clearpage

\section{Numerical results} \label{num}

In order to illustrate both the approximation properties and the visual quality of our $C^2$ trivariate splines, in this section we present some numerical results obtained by computational procedures developed in the Matlab environment. For the evaluation of box splines we use the algorithm based on the Bernstein-B\'ezier form (BB-form) of the box spline, proposed in \cite{kp}.

In order to explore the volumetric data, we visualize some isosurfaces of our trivariate quasi-interpolating splines, generated as a very fine triangular mesh by using the Matlab procedure {\tt isosurface}, with the {\tt lightning phong} command for surface shading \cite{Mat}.

We want to compare our method with other ones proposed in the literature for bounded domains. To this aim, we consider the papers \cite{nrsz,r1,SZ} and the same test functions there presented, i.e.
\begin{itemize}  
\item[-] the Marschner-Lobb function
$$
\begin{array}{l}
f_1(x,y,z)=\\
\displaystyle\frac{1}{2(1+\beta_1)}\left(1- \sin \frac{\pi z}{2} + \beta_1\left(1+ \cos \left(2\pi \beta_2 \cos\left(\frac{\pi \sqrt{x^2+y^2}}{2}\right)\right)\right) \right)
\end{array}
$$
with $\beta_1=\frac14$ and $\beta_2=6$ on $\Omega=[-1,1]^3$. This function is extremely oscillating and therefore it represents a difficult test for any efficient three-dimensional reconstruction method;
\item[-] the smooth trivariate test function of Franke type
$$
\begin{array}{ll}
f_2(x,y,z)= & \displaystyle\frac12 e^{-10((x-\frac14)^2+(y-\frac14)^2)} + \frac34 e^{-16((x-\frac12)^2+(y-\frac14)^2+(z-\frac14)^2)} \\
           & +\displaystyle\frac12 e^{-10((x-\frac34)^2+(y-\frac18)^2+(z-\frac12)^2)} - \frac14 e^{-20((x-\frac34)^2+(y-\frac34)^2)}
\end{array}
$$
on $\Omega=[0,1]^3$;
\item[-] $f_3(x,y,z)= \frac19 \tanh (9(z-x-y)+1)$ on $\Omega=[-\frac12,\frac12]^3$.
\end{itemize}

We recall that N\"urnberger et al. \cite{nrsz} propose quasi-interpolation by quadratic $C^1$ piecewise polynomials in three variables in BB-form, with approximation order two. We denote their quasi-interpolating operator by $S_q$. Sorokina and Zeilfelder \cite{SZ} present local quasi-interpolation by cubic $C^1$ splines in BB-form, with approximation order two. We denote such a QI by $S_c$.

Remogna and Sablonni\`ere \cite{r1} propose a near-best quasi-interpolant, with approximation order three, defined as blending sum of univariate and bivariate $C^1$ quadr\-atic spline quasi-interpolants, on a partition of $\Omega$ into vertical prisms with triangular sections. We denote it by $R_1$.

\subsection{Errors and convergence orders}
We construct our quasi-interpolant $Qf$ with $\lambda_\alpha(f)$ given in Tables \ref{tab-3} through \ref{tab12} and with approximation order four. Here we assume $m_1=m_2=m_3=m$ and $m=16,32,64,128$. For the evaluation of $Qf$, we consider a $139 \times 139 \times 139$ uniform three-dimensional grid $G$ of points in the domain and, for each test function, we compute the maximum absolute error 
$$
E_Sf:=\max_{(u,v,w) \in G}\vert f(u,v,w)-Sf(u,v,w) \vert,
$$
with $S=Q$, $S_q$, $S_c$, $R_1$. 

We report the results in Table \ref{tab:Err3} and Figs. \ref{log1}, \ref{log3}. In Table \ref{tab:Err3} we also report an estimate of the approximation order, $rf$, obtained by the logarithm to base two of the ratio between two consecutive errors. 

\begin{table}[ht]
    \caption{Maximum absolute errors and numerical convergence orders}
\centering\begin{tabular}{|c|cc|cc|cc|cc|}
\hline
$m$ & $E_Qf$  & $rf$  & $E_{S_q}f$  & $rf$ & $E_{S_c}f$ & $rf$ & $E_{R_1}f$ & $rf$ \\
\hline\multicolumn{9}{|c|}{$f_1$}\\
\hline $16$ & 2.0(-1) &     &         &     &         &     & 1.9(-1) &    \\
       $32$ & 1.3(-1) & 0.6 & 1.8(-1) &     & 1.8(-1) &     & 1.5(-1) & 0.4 \\
       $64$ & 6.5(-2) & 1.0 & 1.2(-1) & 0.5 & 1.2(-1) & 0.5 & 3.2(-2) & 2.2 \\
      $128$ & 2.1(-2) & 1.7 & 4.0(-2) & 1.6 & 4.0(-2) & 1.6 & 4.6(-3) & 2.8 \\
\hline
\hline \multicolumn{9}{|c|}{$f_2$}\\
\hline $16$ & 1.7(-2) &     & 4.3(-2) &     & 4.3(-2) &     & 6.6(-3) &    \\
       $32$ & 8.0(-4) & 4.4 & 1.1(-2) & 2.0 & 1.1(-2) & 2.0 & 8.2(-4) & 3.0  \\
       $64$ & 5.2(-5) & 3.9 & 2.8(-3) & 2.0 & 2.8(-3) & 2.0 & 9.8(-5) & 3.1 \\
      $128$ & 3.3(-6) & 4.0 & 6.9(-4) & 2.0 & 6.9(-4) & 2.0 & 8.5(-6) & 3.5 \\
\hline
\hline \multicolumn{9}{|c|}{$f_3$}\\
\hline $16$ & 6.2(-3) &     & 8.8(-3) &     & &  & 6.2(-3) &     \\
       $32$ & 8.2(-4) & 2.9 & 2.4(-3) & 1.9 & &  & 1.1(-3) & 2.5 \\
       $64$ & 8.9(-5) & 3.2 & 6.3(-4) & 1.9 & &  & 1.7(-4) & 2.7 \\
      $128$ & 7.9(-6) & 3.5 & 1.6(-4) & 2.0 & &  & 1.7(-5) & 3.3 \\
\hline
\end{tabular}
    \label{tab:Err3}
\end{table}

\begin{figure}[ht]
\begin{minipage}{60mm}
\centering\includegraphics[width=5.5cm]{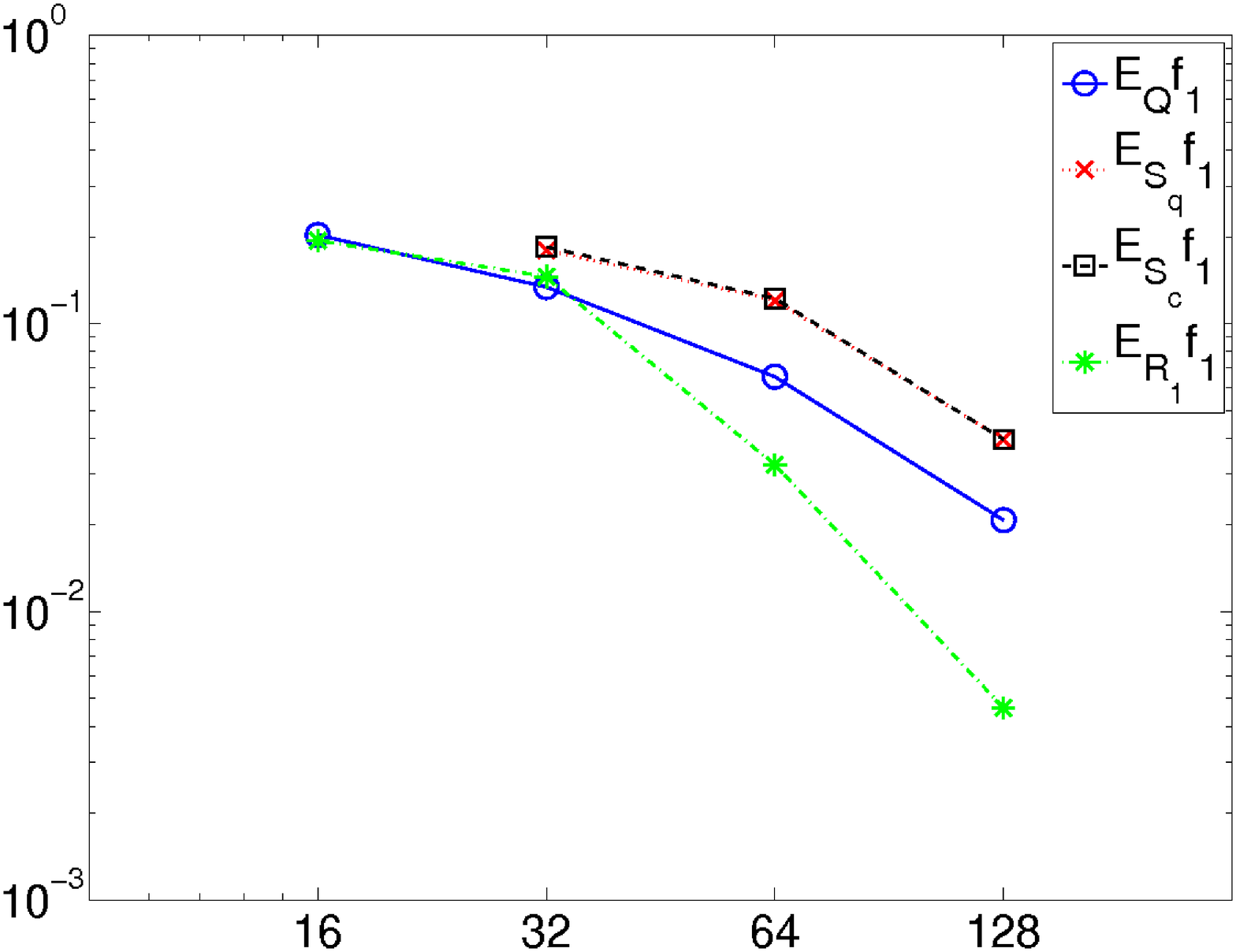}
\centerline{$(a)$}
\end{minipage}
\hfil
\begin{minipage}{60mm}
\centering\includegraphics[width=5.5cm]{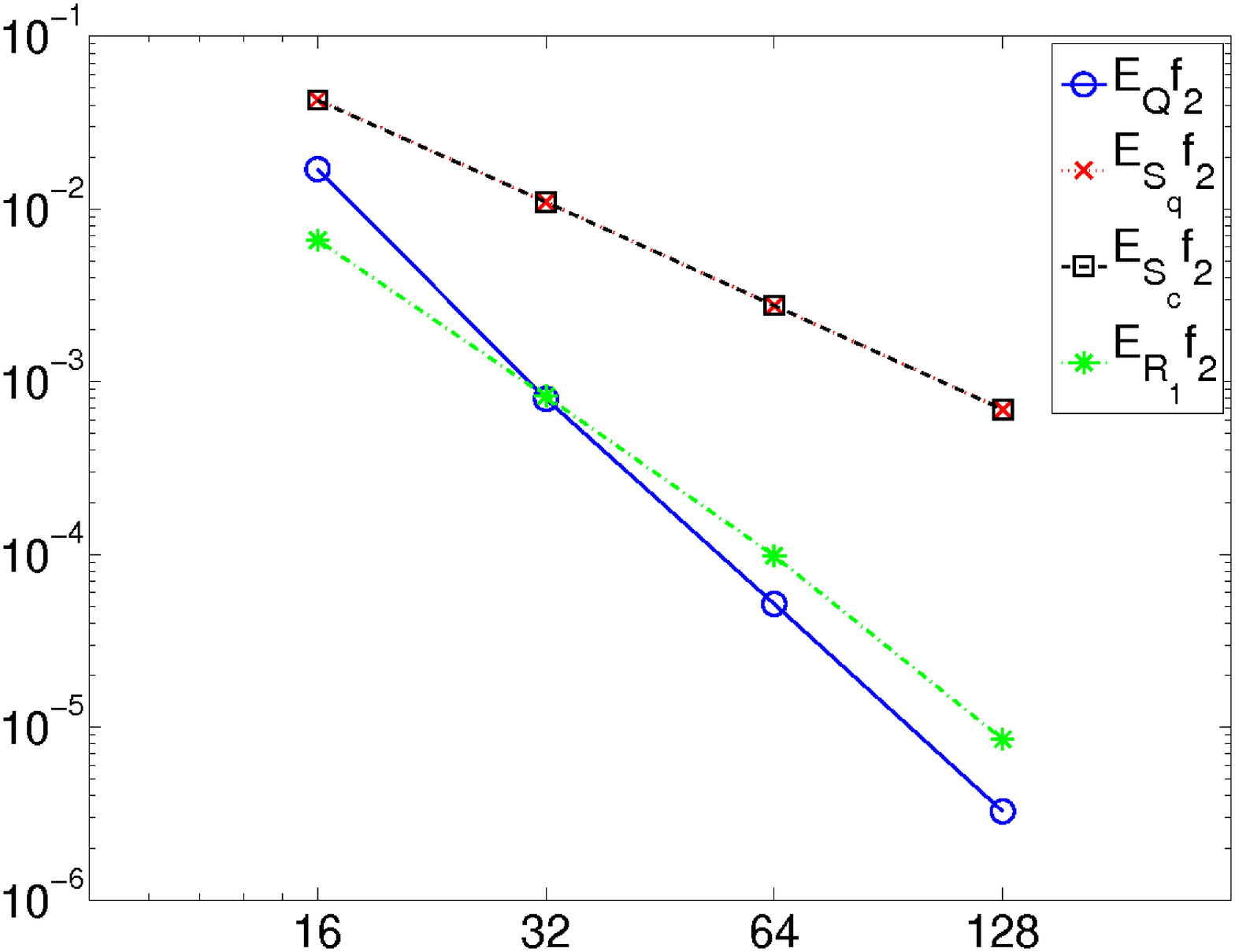}
\centerline{$(b)$}
\end{minipage}
\caption{Absolute errors for $(a)$ $f_1$ and $(b)$ $f_2$ versus interval number per side}
	\label{log1}
\end{figure}

\begin{figure}[ht]
\centering\includegraphics[width=5.5cm]{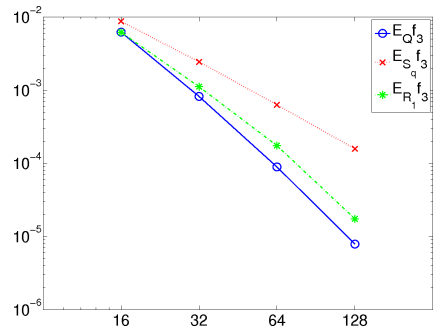}
\caption{Absolute errors for $f_3$ versus interval number per side}
	\label{log3}
\end{figure}

Comparing the results, as we can expect, we notice that using our $C^2$ quartic splines, due to the higher approximation order, the error decreases faster than using $S_q$, $S_c$ and $R_1$, except in case of function $f_1$ for which the errors are comparable.

\subsection{Isosurfaces}
In order to explore the volumetric data and to show the good approximation properties of the operator $Q$, here proposed, we visualize some isosurfaces of the trivariate $C^2$ splines approximating the considered test functions.

In Fig. \ref{ml}$(a)$, we show the isosurface obtained from $f_1$ with isovalue $\rho = 1/2$, i.e. a view on the set of all points $(x,y,z) \in [-1, 1]^3$, such that $f_1(x,y,z) = \rho$. In Fig. \ref{ml}$(b)$, we show the isosurface of the approximating spline $Qf_1$ with isovalue $\rho = 1/2$, with $m=64$ and the maximal error is color coded from red $\approx 6.5 \cdot 10^{-2}$ to blue $=0$.

Moreover, a visualization of the spline $Qf_2$, for $m=32$, using isovalues $\rho=-0.1$, 0, 0.1, 0.2, 0.5 and 0.8, is shown in Fig. \ref{iso2}, where we color coded the maximal error from red to blue (see the colorbar in each figure). For a qualitative visual comparison with competing methods, we can refer to the corresponding isosurfaces shown in \cite{nrsz}.

\begin{figure}[ht]
\begin{minipage}{60mm}
\centering\includegraphics[width=5.5cm]{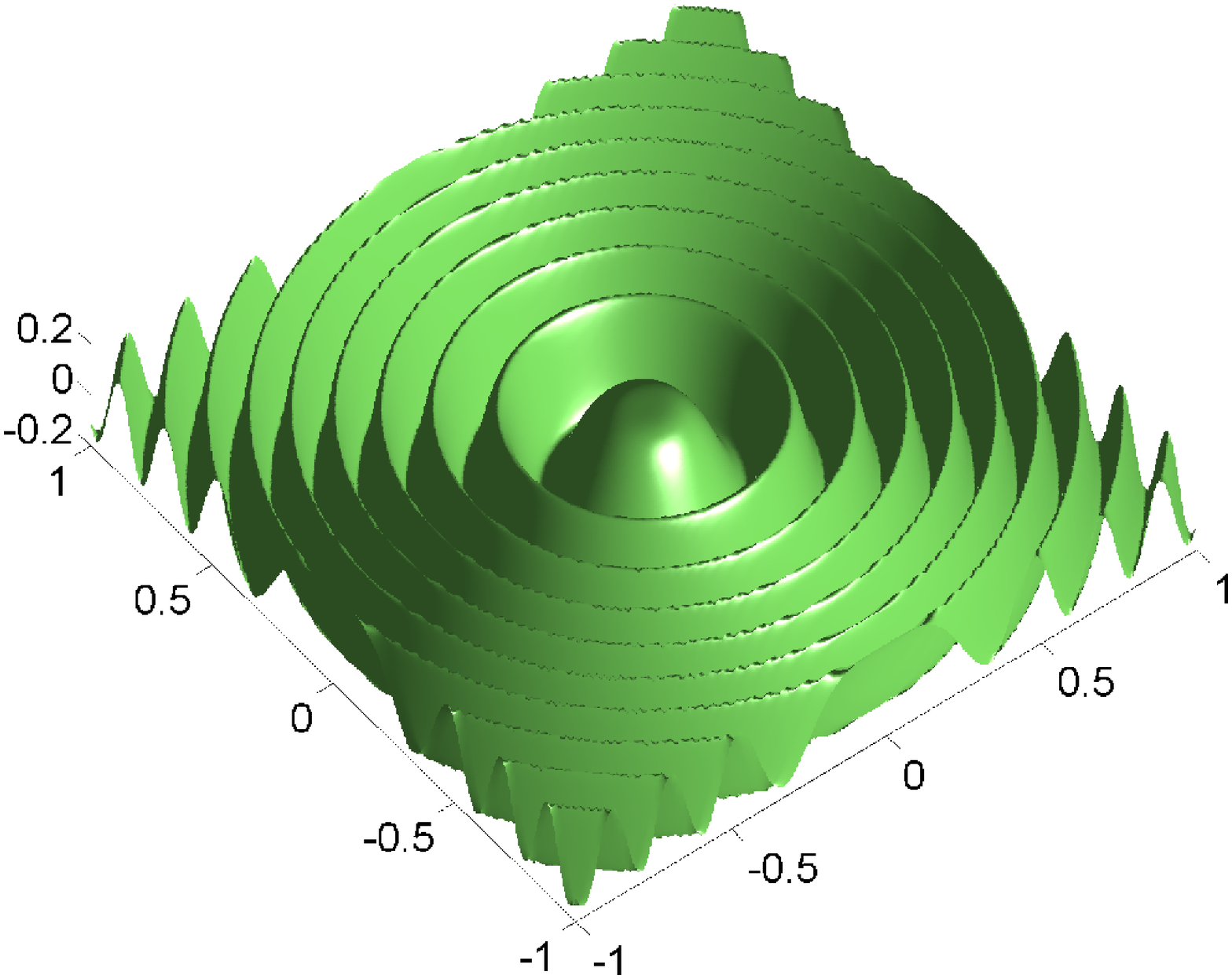}
\centerline{$(a)$}
\end{minipage}
\hfil
\begin{minipage}{60mm}
\centering\includegraphics[width=6.1cm]{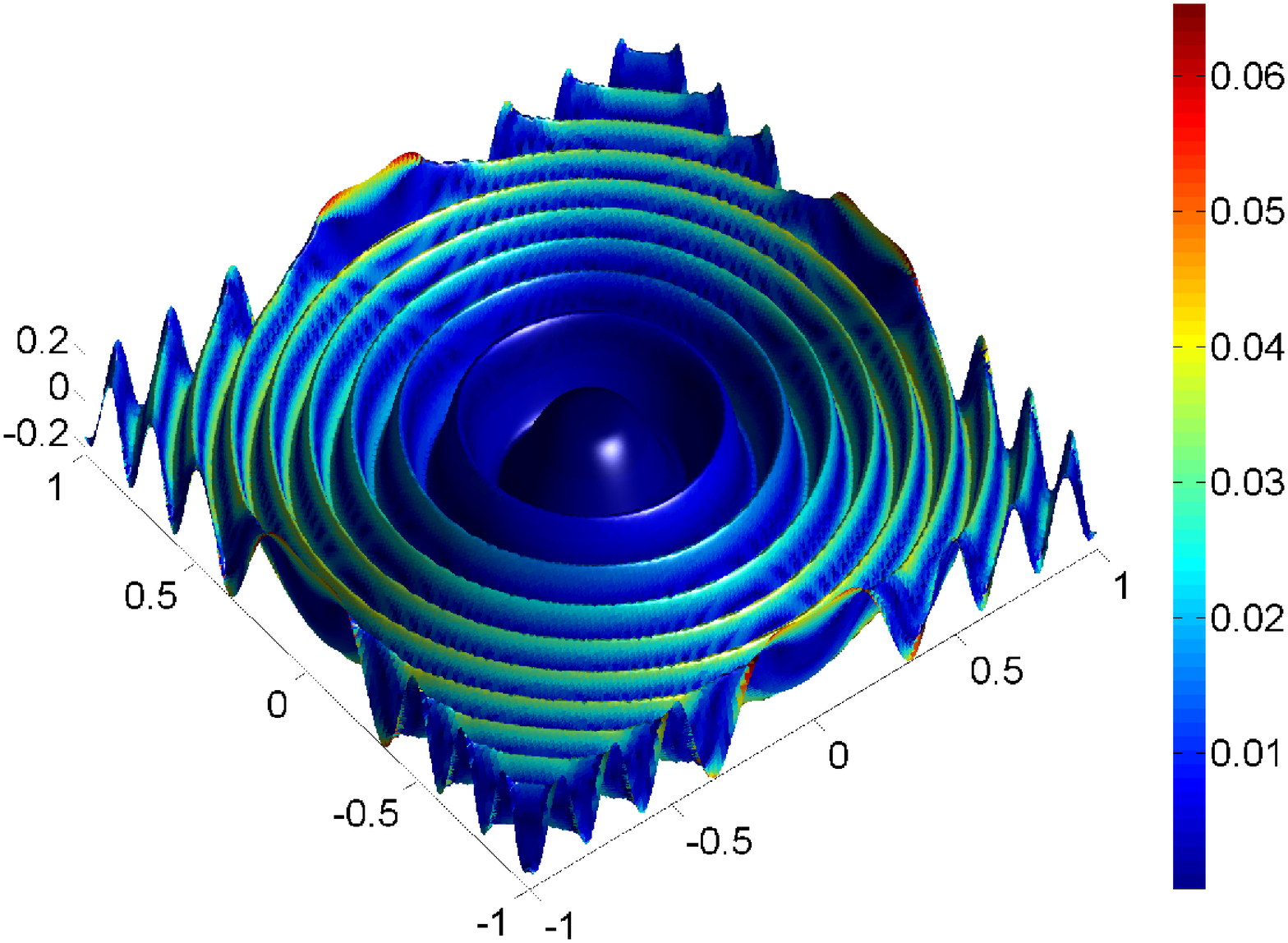}
\centerline{$(b)$}
\end{minipage}
\caption{For the isovalue $\rho = 1/2$ the isosurface of $(a)$ $f_1$ and $(b)$ $Qf_1$, with $m=64$}
	\label{ml}
\end{figure}

\begin{figure}[ht]
\begin{minipage}{60mm}
\centering\includegraphics[width=6cm]{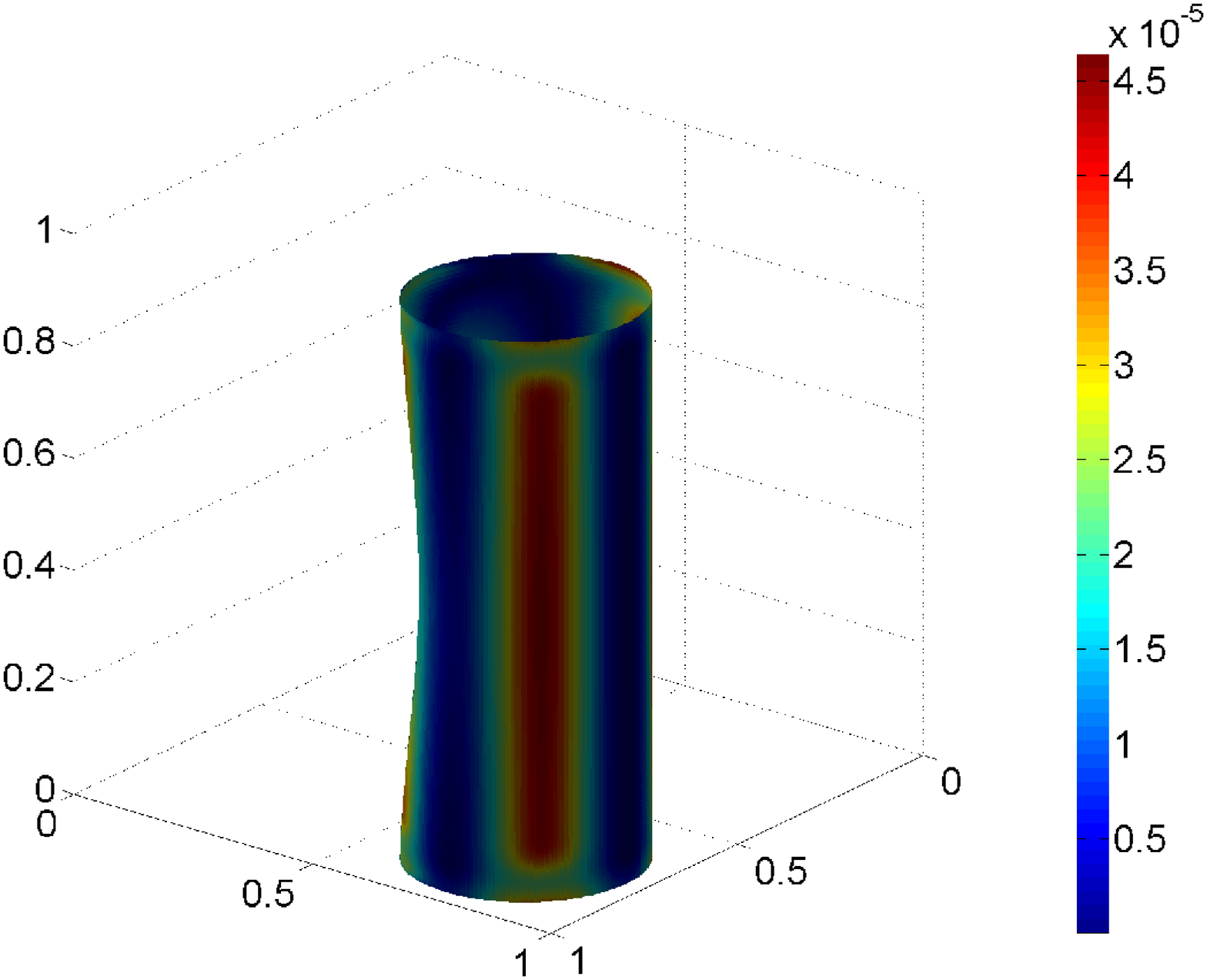}
\centerline{$(a)$}
\end{minipage}
\hfil
\begin{minipage}{60mm}
\centering\includegraphics[width=6cm]{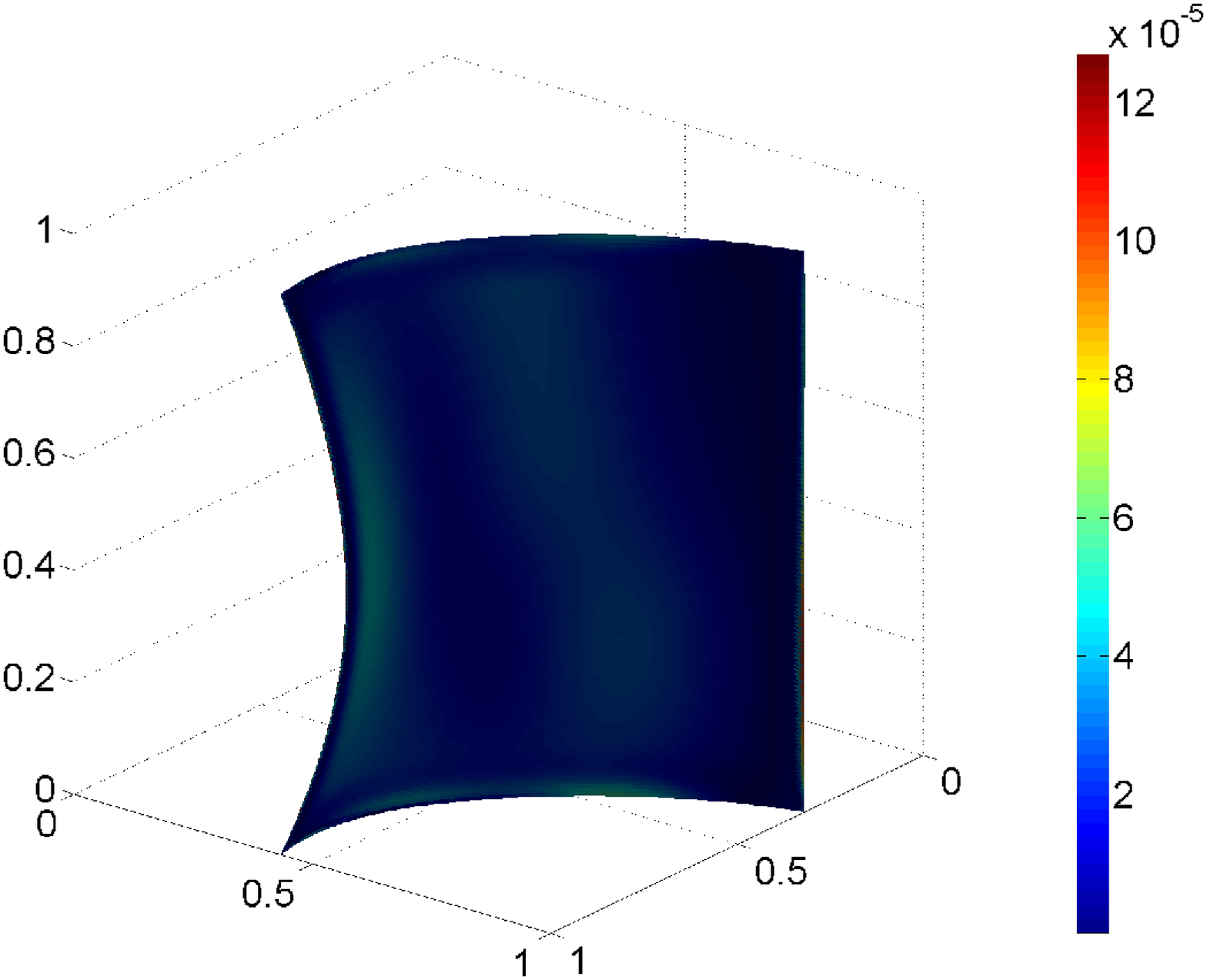}
\centerline{$(b)$}
\end{minipage}
\hfil\begin{minipage}{60mm}
\centering\includegraphics[width=6cm]{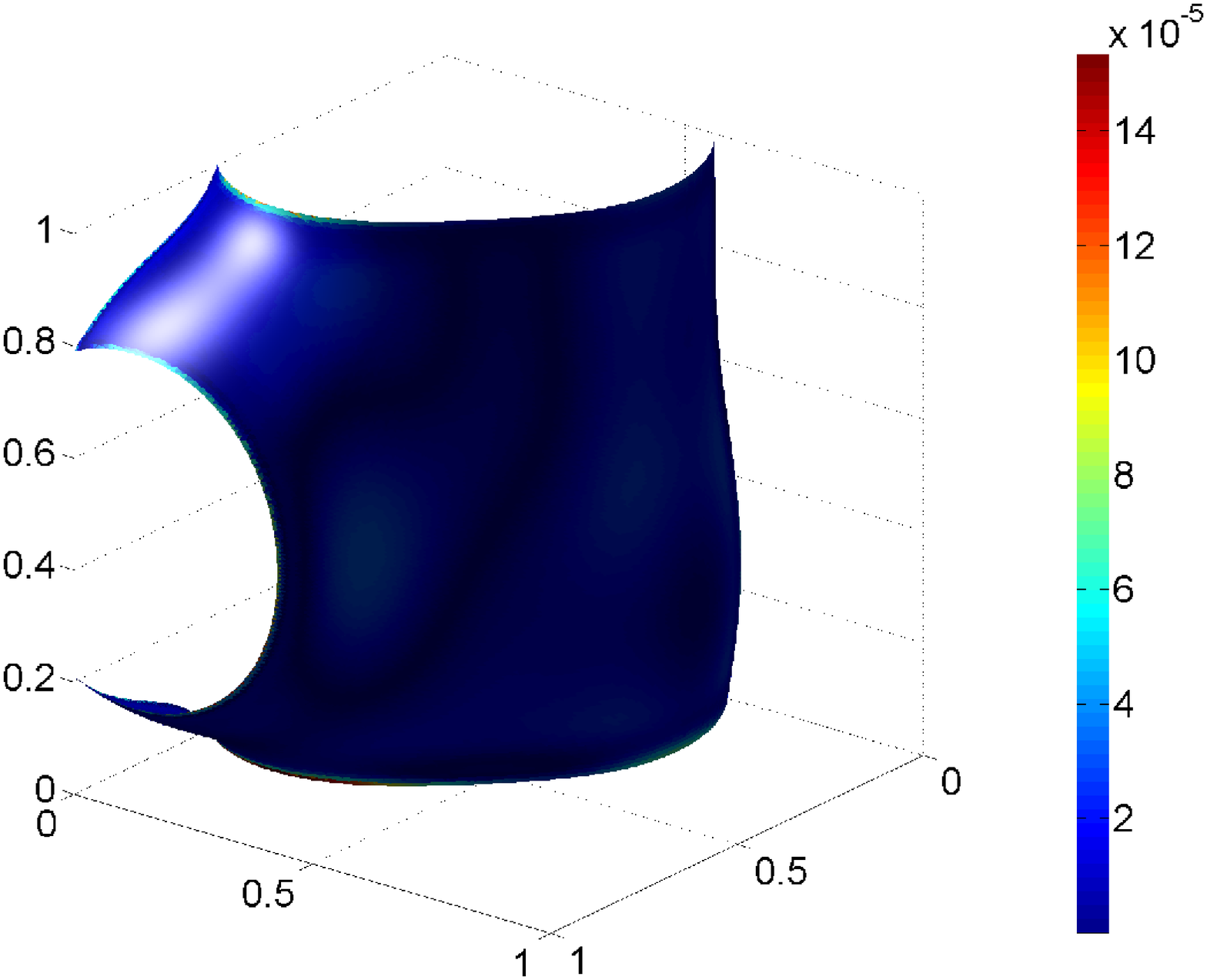}
\centerline{$(c)$}
\end{minipage}
\hfil
\begin{minipage}{60mm}
\centering\includegraphics[width=6cm]{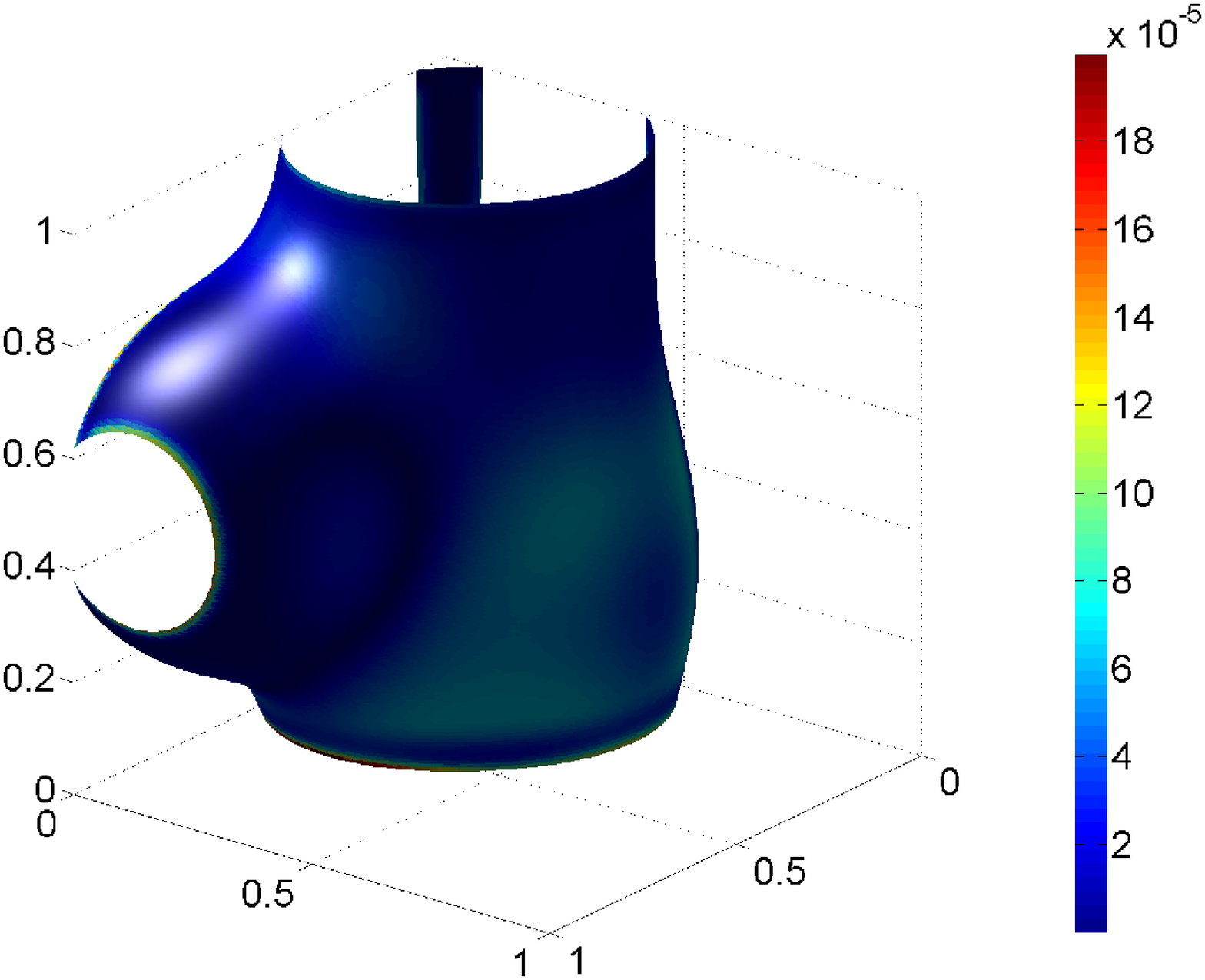}
\centerline{$(d)$}
\end{minipage}
\hfil
\begin{minipage}{60mm}
\centering\includegraphics[width=6cm]{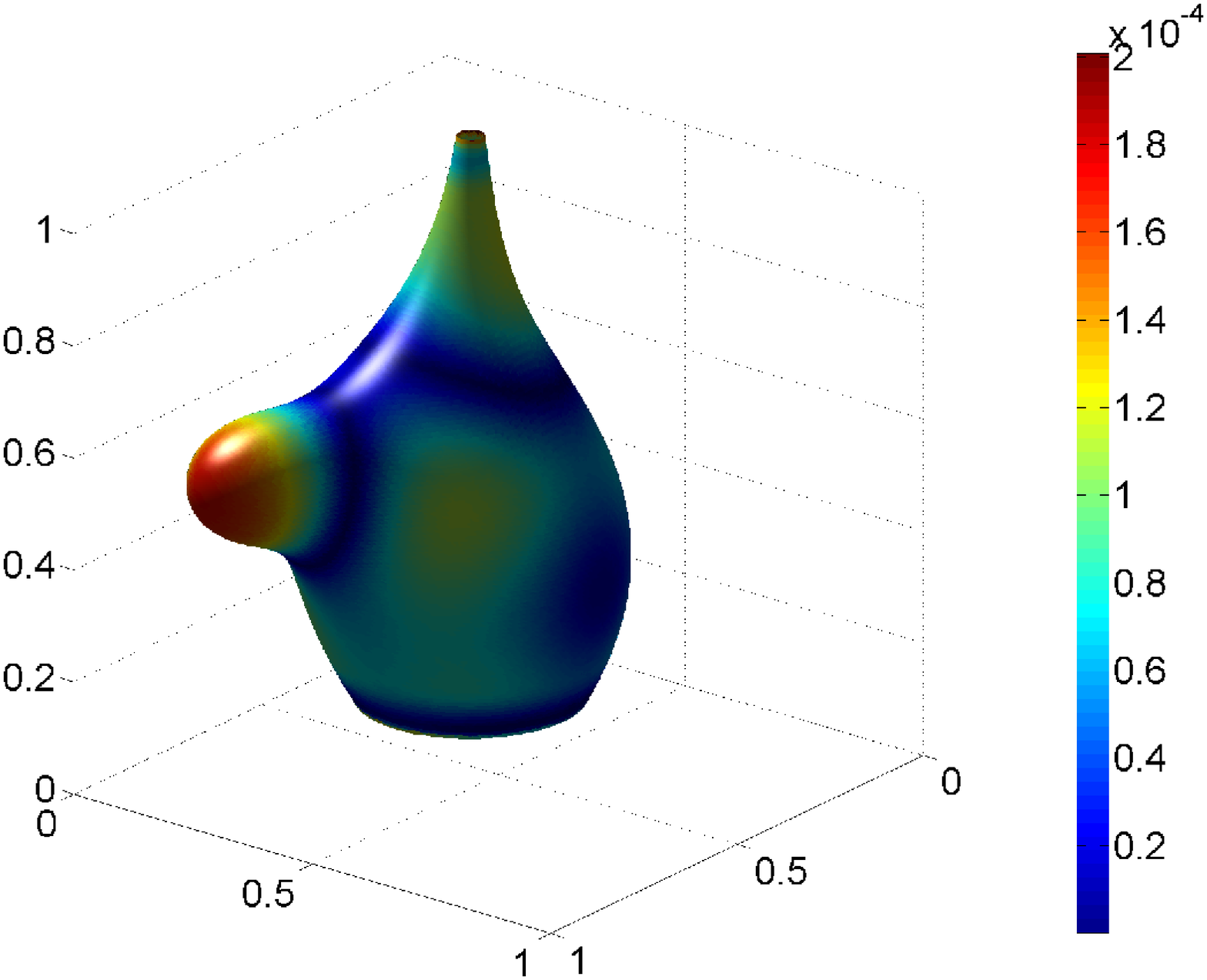}
\centerline{$(e)$}
\end{minipage}
\hfil
\begin{minipage}{60mm}
\centering\includegraphics[width=6cm]{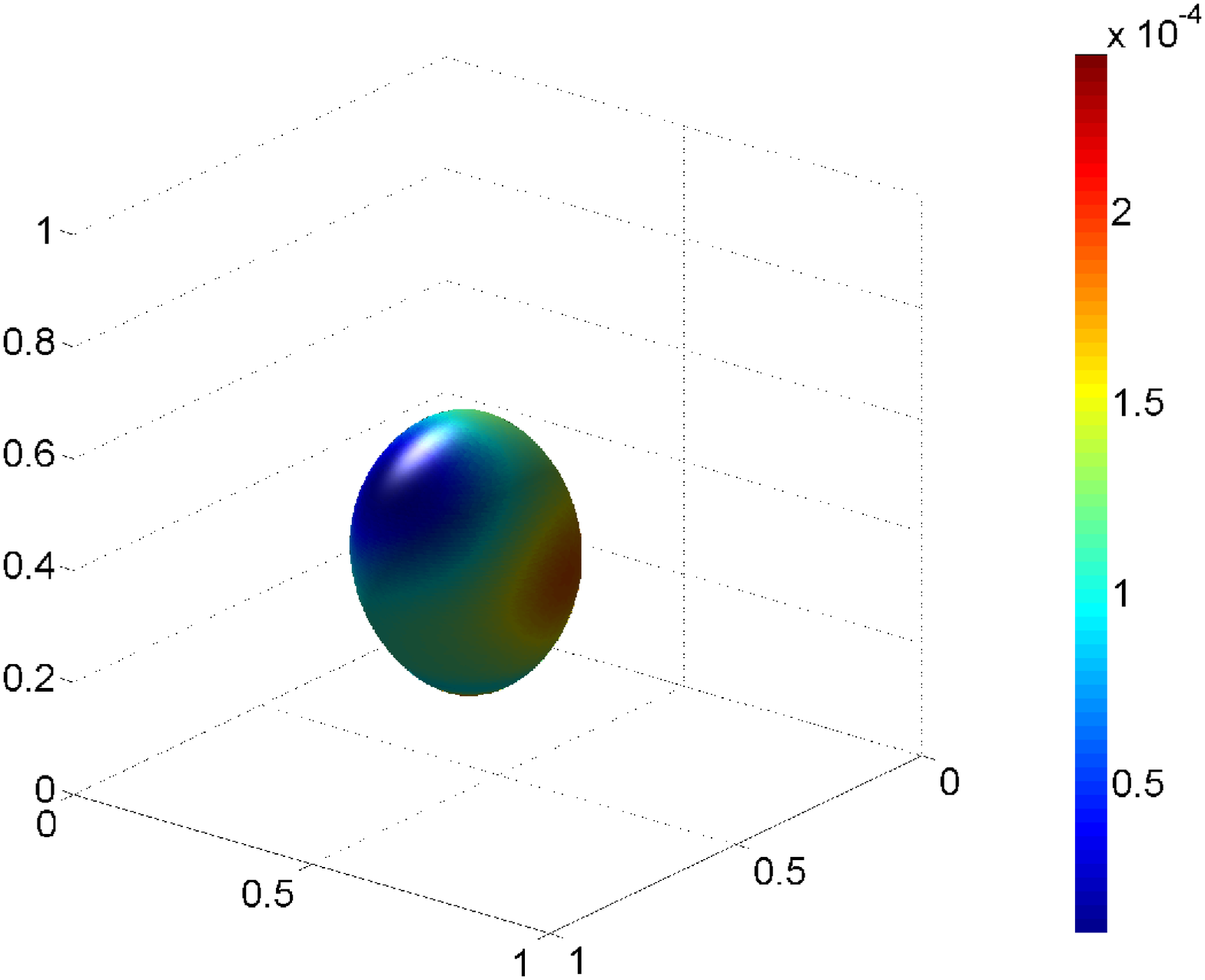}
\centerline{$(f)$}
\end{minipage}
\caption{Isosurfaces of $Qf_2$ for $m=32$, with isovalues $(a)$ $\rho =-0.1$, $(b)$ $\rho =0$, $(c)$ $\rho =0.1$, $(d)$ $\rho =0.2$, $(e)$ $\rho =0.5$ and $(f)$ $\rho =0.8$}
	\label{iso2}
\end{figure}

\subsection{Applications to real world data}
Finally, we present examples based on real world data that can be considered as typical for many applications, where a precise evaluation and a high visual quality are the goals of visualization. In particular, starting from a discrete set of data, we obtain a non-discrete model of the real object with $C^2$ smoothness.

In Fig. \ref{rw} we show two isosurfaces of the $C^2$ quartic spline $Qf$, resulting from the application of our method in the approximation of a gridded volume data set consisting of $256 \times 256 \times 99$ data samples, obtained from a CT scan of a cadaver head (courtesy of University of North Carolina). In order to visualize the isosurfaces, corresponding to the isovalues $\rho=60$, 90, we evaluate the spline at $N\approx 8,6 \times 10^6$ points.

In Fig. \ref{br} we show the isosurface of the $C^2$ quartic spline $Qf$ that approximates a gridded volume data set of $256 \times 256 \times 99$ data samples, obtained from a MR study of head with skull partially removed to reveal brain (courtesy of University of North Carolina). Also in this case, in order to visualize the isosurface, corresponding to the isovalue $\rho=40$, we evaluate the spline at $N\approx 8,6 \times 10^6$ points.

\begin{figure}[ht]
\begin{minipage}{60mm}
\centering\includegraphics[width=6.25cm]{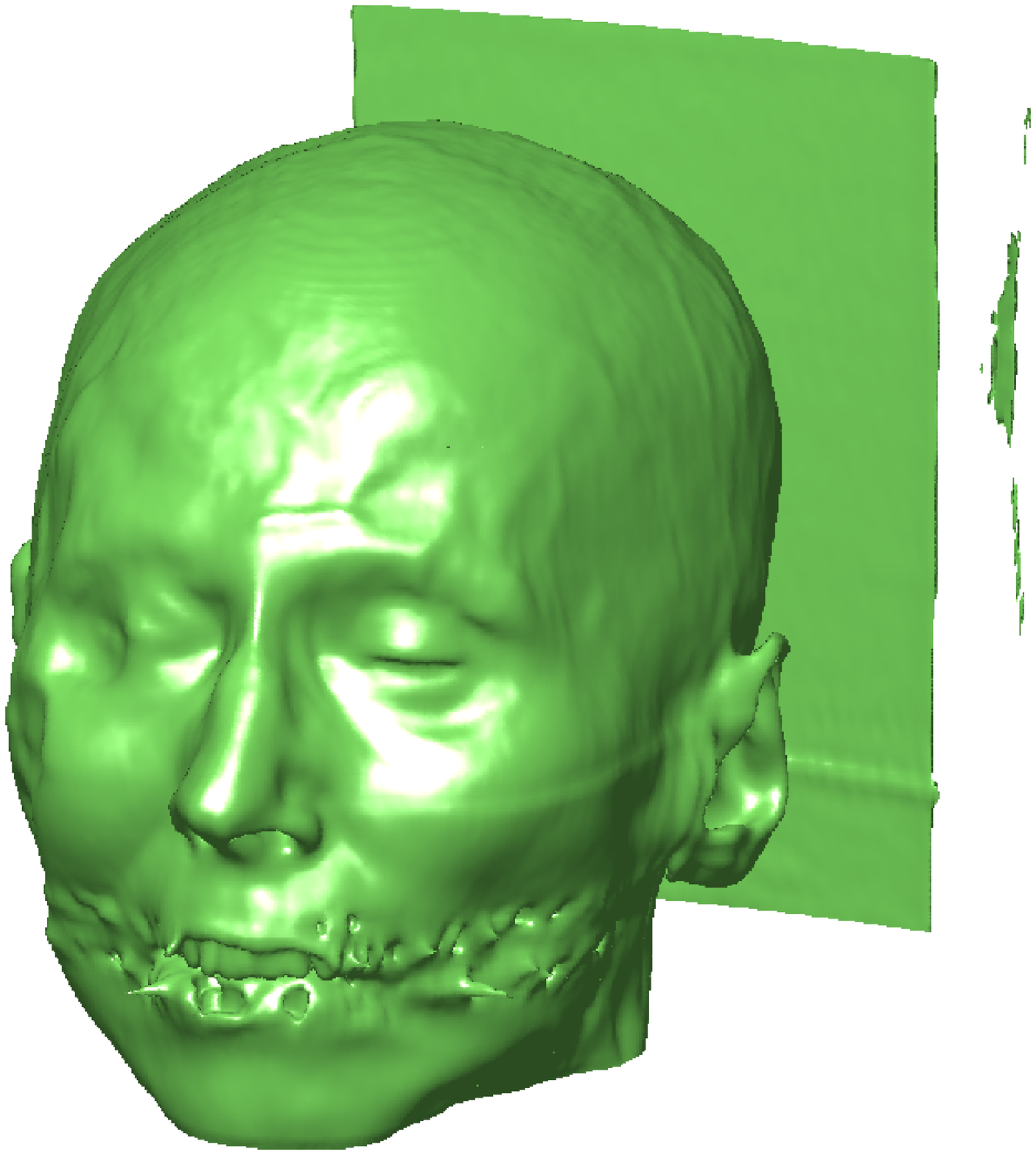}
\centerline{$(a)$}
\end{minipage}
\hfil
\begin{minipage}{60mm}
\centering\includegraphics[width=5.5cm]{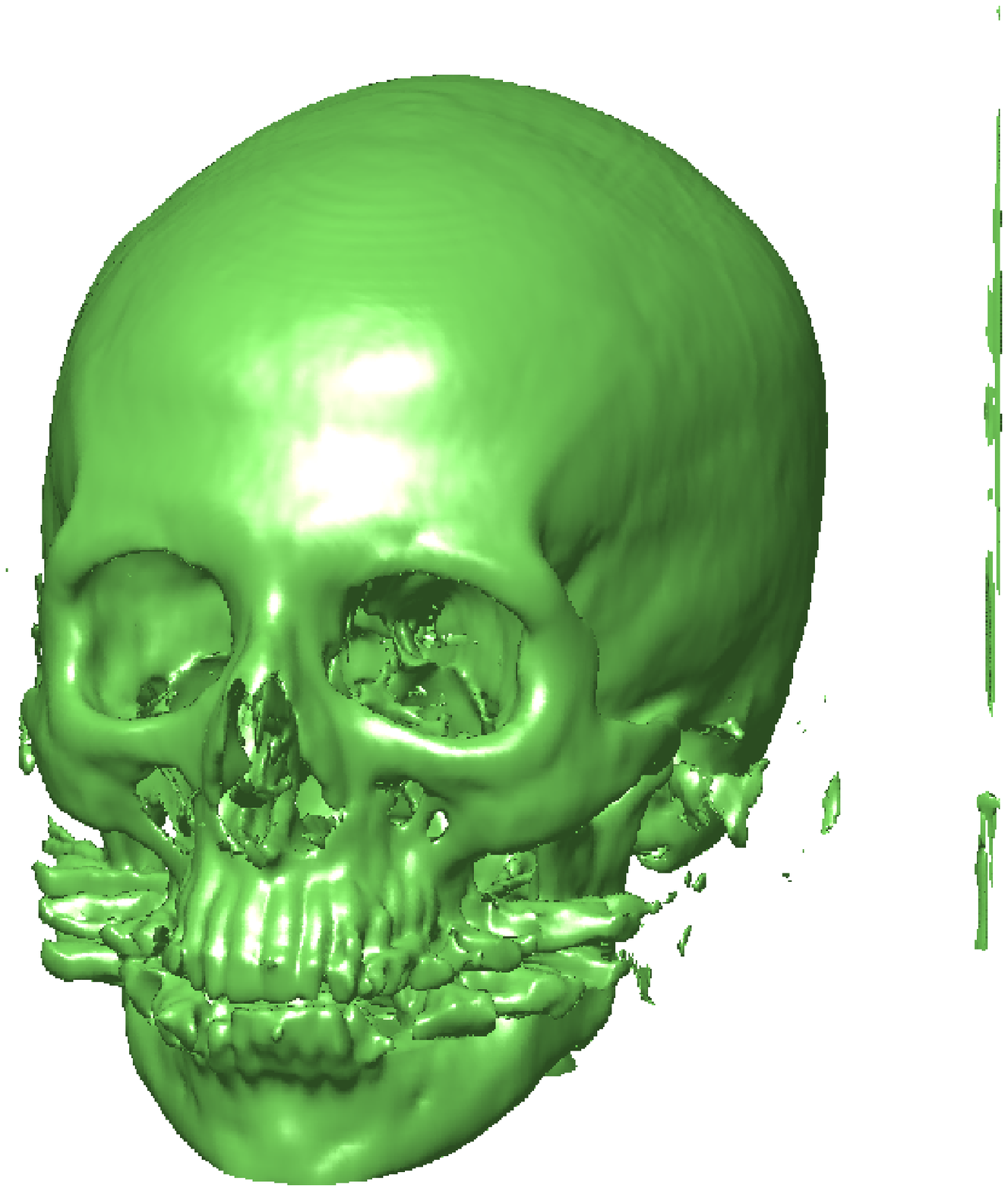}
\centerline{$(b)$}
\end{minipage}
\caption{Isosurfaces of the $C^2$ trivariate quartic spline approximating the {\sl CT Head data set} with isovalues: $(a)$ $\rho = 60$, $(b)$ $\rho = 90$}
	\label{rw}
\end{figure}

\begin{figure}[ht]
\centering\includegraphics[width=9cm]{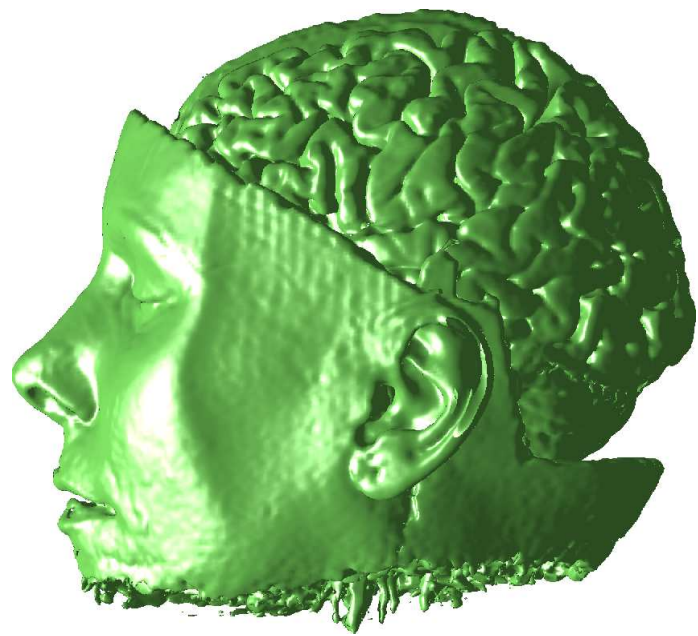}
\caption{Isosurface of the $C^2$ trivariate quartic spline approximating the {\sl MR brain data set} with isovalue  $\rho = 40$}
	\label{br}
\end{figure}

\clearpage

\section{Conclusions}
Given a 3D bounded domain, in this paper we have presented new QIs based on trivariate $C^2$ quartic box splines on type-6 tetrahedral partitions and with approximation order four. They are of near-best type, with coefficient functionals obtained by minimizing an upper bound for their infinity norm. Such QIs can be used for the reconstruction of gridded volume data and their higher smoothness is useful, for example, when functions have to be reconstructed with $C^2$ smoothness. Furthermore, trivariate $C^2$ splines can be used for constructing curvature continuous surfaces by tracing their zero sets.

Moreover, we have given norm and error bounds. Finally, some numerical tests, illustrating the approximation properties of the proposed quasi-interpolants, comparisons with other known spline methods and real world applications have been presented. 

\end{document}